\theoremstyle{plain}
\newtheorem{corollary}{\bf Corollary}
\newtheorem{example}{\bf Example}
\newtheorem{lemma}{\bf Lemma}
\newtheorem{proposition}{\bf Proposition}
\newtheorem{remark}{\bf Remark}
\newtheorem{theorem}{\bf Theorem}
\numberwithin{equation}{section}
\begin{document}

\title[Compact gradient Einstein-type manifolds with boundary]{Compact gradient Einstein-type manifolds with boundary}

\author[Allan George de Carvalho Freitas]{Allan George de Carvalho Freitas$^1$}
\address{$^1$Departamento de Matemática, Universidade Federal da Paraíba, Cidade Universitária, 58051-900, João Pessoa, Paraíba, Brazil.}
\email{allan@mat.ufpb.br;allan.freitas@academico.ufpb.br}
\urladdr{https://www.ufpb.br}

\author[José Nazareno Vieira Gomes]{José Nazareno Vieira Gomes$^2$}
\address{$^2$Departamento de Matemática, Universidade Federal de São Carlos, Rod. Washington Luís, Km 235, 13565-905, São Carlos, São Paulo, Brazil.}
\email{jnvgomes@ufscar.br}
\urladdr{https://www2.ufscar.br}

\keywords{Einstein-type metrics; Rigidity results; Geodesic balls.}
\subjclass[2010]{Primary 53C25; Secondary 53C24.}

\begin{abstract}
We deal with rigidity results for compact gradient Einstein-type manifolds with nonempty boundary. As a result, we obtain new characterizations for hemispheres and geodesic balls in simply connected space forms. In dimensions three and five, we obtain topological characterizations for the boundary and upper bounds for its area.
\end{abstract}
\maketitle

\section{Introduction}

Throughout this paper, $M^n$ denotes an $n$-dimensional, compact, connected, oriented, smooth manifold with nonempty boundary $\Sigma$ and $n\geq3.$ We say that a Riemannian metric $g$ is a gradient Einstein-type metric on $M^{n}$ or that $(M^n,g)$ is a gradient Einstein-type manifold if there exists a smooth function $u$ on $M^{n}$ satisfying the overdetermined problem
\begin{equation}\label{1-1}
\left\{\begin{array}{rcl}
\nabla^{2}u &=& \frac{\mu}{\beta}u\big(\Lambda g-\frac{\alpha}{\beta}Ric\big)+\gamma g,\\
u&>&  0\quad \hbox{in}\quad int(M),\\
u&=& 0\quad \hbox{on}\quad  \Sigma,
\end{array}\right.
\end{equation}
for some smooth function $\Lambda$ on $M^{n}$ and constants $\alpha, \beta, \mu, \gamma \in \mathbb{R}$, with $\beta\neq 0.$ Here $\nabla^2u$ denotes the Hessian of $u,$ and $Ric$ stands for the Ricci tensor of $g.$ We observe that the $\mu=0$ and $\gamma\neq0$ case has already been covered by the classical Reilly's result~\cite[Lemma~3 and Theorem~B]{reilly2}, from which we know that if a compact Riemannian manifold $(M^n,g)$ with nonempty connected boundary $\Sigma$ admits a smooth function $f$ on $M^n$ and a nonzero constant $L$ such that $\nabla^2f=Lg$ and $f|_\Sigma$ is constant, then it is isometric to a (metric) ball in a Euclidean space, while the $\mu=\gamma=0$ case, the function $u$ must be constant by the maximum principle and we have a contradiction. So, for our purposes, it is enough to consider $\mu\neq0.$ Furthermore, for the $\gamma\neq 0$ case, we have to consider $0$ as a regular value of $u$, while for $\gamma=0$, this is a consequence of \eqref{1-1}, see Proposition~\ref{prop1}.

We highlight that the sign of $\alpha\mu$ is of crucial significance for compact gradient Einstein-type manifolds of constant scalar curvature, see Section~\ref{RbtEtm}. For instance, in dimension three, we give a complete topological classification for the boundary by means of the sign of $\alpha\mu,$ see Section~\ref{3-dimension}.

The class of metrics defined by \eqref{1-1} is closely related to two particular cases of metrics studied in both mathematics and physics frameworks. The first one relies on the theory of static and $V$-static metrics, which have applications on General Relativity, black holes problems and critical metrics of volume functional, see, e.g.,~\cite{hawking,HMRa,MT}. Indeed, for $\alpha=\beta=-\mu$ and $\gamma=0$ or $\gamma=1$, we additionally suppose that $\Lambda$ is a constant and $-\Delta u=\Lambda u$ so that we recover the special cases of static and $V$-static metrics, respectively. The second one arises from the construction of warped product Einstein metrics through the equation for the Ricci tensor of the base space with nonempty boundary, the so-called $(\lambda,n+m)$-Einstein equation by He, Petersen and Wylie~\cite{HPW1}. They observed that such equation is a natural case to consider since a manifold without boundary often occurs as a warped product over a manifold with nonempty boundary.

In the setting of manifolds without boundary, we highlight the general family of gradient Einstein-type metrics $g$, namely:
\begin{equation}\label{1-2}
\alpha Ric + \beta \nabla^2f+ \mu df\otimes df = (\rho S +\lambda)g,
\end{equation}
where $\alpha,\beta,\mu,\rho$ are constants with $(\alpha,\beta,\mu)\neq(0,0,0)$, $f,\lambda$ are smooth functions and $S=tr(Ric)$ is the scalar curvature of $g$. This definition, originally introduced by Catino et al.~\cite{CMMR}, unifies various particular cases of metrics studied in the current literature, such as Ricci solitons, $\rho-$Einstein solitons, Yamabe solitons, among others, see also the paper by the second author in~\cite{nazareno}. A metric as defined in \eqref{1-2} is nondegenerate if $\beta^2\neq(n-2)\alpha\mu$ and $\beta\neq0$. Otherwise, if $\beta^2=(n-2)\alpha\mu$ and $\beta\neq0$ we have a degenerate gradient Einstein-type metric. In~\cite{CMMR} the authors give a justification for this terminology through an equivalence of a degenerate gradient Einstein-type metric with a conformally Einstein metric. In Section~\ref{Sec-DNDC}, we observe that PDE~\eqref{1-1} in $int(M)$ is equivalent to an equation like~\eqref{1-2}, this allows us to characterize the degenerate condition in our case.

In~\cite{nazareno} the second author focused his attention on the case $\beta\neq0$, which includes both degenerate and nondegenerate cases. In particular, the nondegenerate condition has been crucial for him, see~\cite[Eq.~(3.10)]{nazareno}. For it, he used that equation~\eqref{1-2} is equivalent to
\begin{equation}\label{1-3}
\frac{\alpha}{\beta}Ric + \frac{\beta}{\mu u}\nabla^2 u=\Lambda g,
\end{equation}
where $u=e^{\frac{\mu}{\beta}f}$ and $\Lambda=\frac{1}{\beta}(\rho S+\lambda)$. This was indeed one of our key insights to approach this class of metrics on a smooth manifold with nonempty boundary. Here, we assume the existence of a nonnegative solution $u$ of PDE~\eqref{1-1}, and investigate its topological consequences.

We first focus attention on basic properties for the boundary of a gradient Einstein-type manifold and give some examples, see Section~\ref{bpBEx}. After, we address the Einstein case, in this setting, we highlight Proposition~\ref{sinalgamma} as an important inequality by means of $\gamma.$ Furthermore, we prove two interesting rigidity results characterizing geodesic balls and hemispheres, see Theorems~\ref{teogammaneq0} and~\ref{gamma0}. Coming back to the general case, we obtain an integral relationship between the gradient Einstein-type manifold and its boundary, see Proposition~\ref{thmA}. As an application, we prove a Chru$\acute{s}$ciel type inequality and approach its consequences to gradient Einstein-type manifolds, see Theorems~\ref{corovstatic} and~\ref{corostatic}. Also, we obtain upper bounds for the area boundary in arbitrary dimension, see Theorem~\ref{areangeneral}. In particular, we get topological constraints and classifications for the boundary of a gradient Einstein-type manifold in three and five dimensional cases, see Proposition~\ref{topsphere} and Corollaries~\ref{coro1}, \ref{coro3} and~\ref{teofivedimensional}. Finally, Theorem~\ref{HCGEtM} gives a rigidity result of geodesic balls on a class of gradient Einstein-type manifolds with constant Ricci curvatures and nondegenerate condition in their topological interior. 

\section{Boundary properties and some examples}\label{bpBEx}

Here, we discuss basic properties for the boundary of a gradient Einstein-type manifold by means of $\gamma$. We observe that such properties has been proved in particular cases by He-Petersen-Wylie~\cite{HPW1} and Miao-Tam~\cite{MT}. Our proof follows as in the latter two papers. For the sake of clarity, we have adapted the proof to our case.

\begin{proposition}\label{prop1}
Let $(M^n,g)$ be a gradient Einstein-type manifold with boundary $\Sigma$.
\begin{itemize}
\item[(i)] The boundary $\Sigma$ is a totally umbilical hypersurface, with second fundamental form $\mathcal{A}=-\gamma|\nabla u|^{-1}g_{\Sigma}.$ In particular, if $\gamma=0$, then $\Sigma$ is a totally geodesic hypersurface.
\item[(ii)]Let $\Sigma=\cup\Sigma_{i}$ be the disjoint union of all connected components $\Sigma_i$ of $\Sigma$. Then, the restriction of $\xi_i:=|\nabla u|$ to each $\Sigma_i$ is a positive constant.
\end{itemize}
\end{proposition}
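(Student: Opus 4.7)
The plan is to first verify that $0$ is a regular value of $u$, i.e., $\nabla u\neq 0$ along $\Sigma$, so that the gradient determines a well-defined normal direction. When $\gamma\neq 0$ this is imposed as part of the setup, as mentioned in the paragraph preceding the proposition. When $\gamma=0$, the equation \eqref{1-1} becomes $\nabla^2u=\frac{\mu}{\beta}u\big(\Lambda g-\frac{\alpha}{\beta}Ric\big)$, so tracing yields a linear elliptic equation $\Delta u = c(x) u$ for a smooth function $c$. Since $u>0$ in $\mathrm{int}(M)$ and $u=0$ on $\Sigma$, Hopf's boundary point lemma forces $\partial u/\partial\nu<0$ on $\Sigma$, where $\nu$ is the outward unit normal, so $|\nabla u|>0$ on $\Sigma$. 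In either case we can write $\nu=-\nabla u/|\nabla u|$ on $\Sigma$.

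For item (i), I would compute the second fundamental form directly from its definition $\mathcal{A}(X,Y)=g(\nabla_X\nu,Y)$ for $X,Y$ tangent to $\Sigma$. Substituting $\nu=-\nabla u/|\nabla u|$ and using that $g(\nabla u,Y)=0$ for $Y$ tangent (since $\nabla u$ is normal to $\Sigma$), the computation collapses to
\[
\mathcal{A}(X,Y)=-\frac{1}{|\nabla u|}\nabla^2u(X,Y).
\]
Now plug in the Einstein-type equation \eqref{1-1}. Because $u=0$ on $\Sigma$, the entire $\frac{\mu}{\beta}u\big(\Lambda g-\frac{\alpha}{\beta}Ric\big)$ term drops out, leaving $\nabla^2u(X,Y)=\gamma\,g(X,Y)$ on $\Sigma$. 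This gives $\mathcal{A}=-\gamma|\nabla u|^{-1}g_\Sigma$, hence total umbilicity, and the totally geodesic case when $\gamma=0$.

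For item (ii), I would differentiate $|\nabla u|^2$ along a vector $X$ tangent to $\Sigma$:
\[
X(|\nabla u|^2)=2\,\nabla^2 u(X,\nabla u).
\]
On $\Sigma$ the equation \eqref{1-1} gives
\[
\nabla^2 u(X,\nabla u)=\frac{\mu}{\beta}u\Big(\Lambda g(X,\nabla u)-\frac{\alpha}{\beta}Ric(X,\nabla u)\Big)+\gamma\,g(X,\nabla u),
\]
and both terms vanish: the first because $u|_\Sigma=0$, and the second because $\nabla u$ is normal to $\Sigma$ while $X$ is tangent, so $g(X,\nabla u)=0$. Thus $|\nabla u|^2$ is locally constant on $\Sigma$, hence constant on each connected component $\Sigma_i$, and the constant is strictly positive by the regularity established in the first paragraph.

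The only nontrivial point is the regularity of $u$ on $\Sigma$ in the $\gamma=0$ case, which requires invoking Hopf's lemma for the linear elliptic equation obtained by tracing \eqref{1-1}. Everything else is a direct algebraic consequence of evaluating the Hessian identity along vectors tangent to $\Sigma$ and using $u|_\Sigma=0$, so the argument is essentially a bookkeeping exercise once the normal direction is identified.
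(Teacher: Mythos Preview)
Your proof is correct and, for items (i) and (ii), essentially identical to the paper's: both evaluate $\nabla^2u=\gamma g$ on $\Sigma$ (using $u|_\Sigma=0$) and read off the second fundamental form and the tangential derivative of $|\nabla u|^2$ exactly as you do.

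The one genuine difference is the regularity step when $\gamma=0$. You trace \eqref{1-1} to obtain $\Delta u=c(x)u$ and invoke Hopf's boundary point lemma (which is legitimate here since the boundary minimum of $u$ is zero, so no sign condition on $c$ is needed). The paper instead uses the full Hessian equation: along any unit-speed geodesic $\sigma$ from a boundary point into the interior, $\theta(t)=u(\sigma(t))$ satisfies the linear ODE $\theta''=f(t)\theta$ with $\theta(0)=0$; if also $\theta'(0)=0$, uniqueness forces $\theta\equiv 0$, contradicting $u>0$ in $\operatorname{int}(M)$. The paper's argument is slightly more elementary and self-contained (only ODE uniqueness, no elliptic theory), while yours is the natural PDE route; both are short and standard.
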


\begin{proof}
In the case where $\gamma\neq 0$, we already assume that $0$ is a regular value for $u,$ then $\nabla u(x)\neq 0$ for all $x\in\Sigma$. Let us now assume that $\gamma=0$ and proving that, without an additional hypothesis, $\nabla u(x)\neq 0$ for all $x\in\Sigma$. For it, we take a unit speed geodesic $\sigma:[0,1]\to M^n$ so that $\sigma(0)=x\in\Sigma$ and $\sigma(1)\in int(M)$, thus the function $\theta:[0,1]\to\mathbb{R}$ given by $\theta(t)= u(\sigma(t))$ satisfies $\theta(0)=u(x)=0$ and
\begin{eqnarray*}
\theta''(t) = \nabla^2u(\sigma'(t),\sigma'(t)) =\frac{\mu}{\beta}\big(\Lambda g-\frac{\alpha}{\beta}Ric\big)(\sigma'(t),\sigma'(t))\theta(t).
\end{eqnarray*}
If $\nabla u(x)=0$, then we would have that $\theta(t)$ is the solution for the initial value problem $\theta''(t)=f(t)\theta(t)$, with $\theta'(0)=0$ and $\theta(0)=0$. Hence, we would have that $u$ vanishes along $\sigma$, which is a contradiction.

Since $\Sigma=u^{-1}(0)$ we can choose $\nu=-\frac{\nabla u}{|\nabla u|}$ to be a unit normal vector field on $\Sigma.$ Note that from \eqref{1-1}, we have $\nabla^2u=\gamma g$ on $\Sigma.$ Thus, for any vector fields $X,Y\in\mathfrak{X}(\Sigma)$, we get
\begin{equation}\label{Cal-A}
\gamma g_{\Sigma}(X,Y)=\nabla^{2}u(X,Y)=-\langle \nabla_{X}Y,\nabla u\rangle=|\nabla u|\langle \nabla_{X}Y,\nu\rangle= - |\nabla u|\mathcal{A}(X,Y),
\end{equation}
where $\mathcal{A}$ is the second fundamental form of $\Sigma.$ From identity~\eqref{Cal-A} and the fact that $\nabla u\neq 0$ on $\Sigma$, we conclude (i).

For assertion (ii), we again use that: $\nabla^2u=\gamma g$ on $\Sigma$ and $\nabla u$ is normal to $\Sigma$, hence
\begin{equation*}
X(|\nabla u|^{2})=2\nabla^{2}u(X,\nabla u)=0,
\end{equation*}
for all $X\in\mathfrak{X}(\Sigma).$ So, we conclude that the restriction of $|\nabla u|=:\xi_i$ to each   connected component $\Sigma_i$ of $\Sigma$ is a positive constant.
\end{proof}

Next, we give some examples of gradient Einstein-type metrics on geodesic balls in simply connected space forms.

\begin{example}\label{hemisferio}
Let $\mathbb{S}^{n}_{+}\subset\mathbb{R}^{n+1}$ be the upper hemisphere whose boundary is the unitary round sphere $\mathbb{S}^{n-1}.$ The Euclidean metric $g$ is an Einstein-type metric on $\mathbb{S}^{n}_{+}.$ Indeed, given a constant $\alpha$ and nonzero constants $\beta$ and $\mu$. Consider $\Lambda=\frac{\alpha}{\beta}(n-1)-\frac{\beta}{\mu}$ and the height function $h_{v}(x)=\langle \Vec{x},v\rangle$ with respect to vector $v=e_{n+1}.$ Now, it is enough to note that $h_{v}>0$ in $\mbox{int}(\mathbb{S}^{n}_{+})$, $h_{v}=0$ on $\mathbb{S}^{n-1}$, $Ric_{g}=(n-1)g$ and $\nabla^{2}h_{v}=-h_{v}g,$ moreover, $\gamma$ must be zero.
\end{example}

\begin{example}\label{geodesicballRn}
Let $B^{n}$ be the round unitary geodesic ball in $\mathbb{R}^{n}$. It is trivial see that the Euclidean metric is an Einstein-type metric on $B^{n}.$ For this, consider the function $u(x)=\frac{1}{2}-\frac{1}{2}|x|^{2}.$ Thus, given a constant $\alpha$ and nonzero constants $\beta$ and $\mu$, we can choose $\Lambda=0$ and $\gamma=-1$.
\end{example}

\begin{example}\label{geodesicballsn}
Let $\mathbb{S}^n$ be the unitary round sphere with its canonical metric $g$, and take a point $p\in\mathbb{S}^n.$ Let $M^{n}$ be the geodesic ball $B(p,r)$ of radius $r<\frac{\pi}{2}$ centered at $p$ with boundary $\Sigma=\partial B(p,r).$ Consider the radial smooth function 
\begin{equation*}
u(exp_p(sv))=\frac{\cos s}{\cos r}-1,\quad with \quad 0\leq s\leq r\quad  and \quad |v|=1.
\end{equation*}
Note that $u>0$ in $int(M)$ and $u=0$ on $\Sigma$ (where $s=r$). Thus, given a constant $\alpha$ and nonzero constants $\beta$ and $\mu$, we can choose constants $\Lambda$ and $\gamma$ such that $g$ be an Einstein-type metric on $M^n.$ In fact, since $Ric=(n-1)g$, from \eqref{1-1} we have
\begin{equation*}
-\frac{\cos s}{\cos r}=\frac{\mu}{\beta}\left(\frac{\cos s}{\cos r}-1\right)\Big(\Lambda-\frac{\alpha}{\beta}(n-1)\Big)+\gamma
\end{equation*}
and as $\gamma$ is a constant, we can apply the previous equation on $\Sigma$ to get $\gamma=-1$. We also obtain that
\begin{equation*}
\Lambda=\frac{\alpha}{\beta}(n-1)-\frac{\beta}{\mu}.    
\end{equation*}
Furthermore, consider the level sets of $u$, which we denote by $\Sigma_{s}$, $0<s\leq r$. By a straightforward computation, the mean curvature of $\Sigma_s$ is $H_s=(n-1)\cot s$, besides, the area element $A(s)$ of $\Sigma_{s}$ satisfies the initial value problem
\begin{equation*}
A'(s)=(n-1)\cot s A(s), \quad A(0)=0.
\end{equation*}
\end{example}

\begin{example}\label{geodesicballHn}
Let $\mathbb{H}^{n}$ be the hyperbolic space with its canonical metric $g$, and take a point $p\in \mathbb{H}^{n}.$ If $M^{n}$ is a geodesic ball $B(p,r)$ of radius $r$ centered at $p$, then, in an analogous way to Example~\ref{geodesicballsn}, we can prove that $g$ is an Einstein-type metric on $M^{n}$. For it, define the radial smooth function
\begin{equation*}
u(exp_p(sv))=1-\frac{\cosh s}{\cosh r}, \quad  with \quad 0\leq s\leq r\quad and \quad |v|=1,
\end{equation*}
so that $u>0$ in $int(M)$ and $u=0$ on $\Sigma$ (where $s=r$). Take a constant $\alpha$ and two nonzero constants $\beta$ and $\mu$. Next, note that $Ric=-(n-1)g$, hence 
\begin{equation*}
\gamma=-1 \quad and \quad \Lambda= \frac{\beta}{\mu}-\frac{\alpha}{\beta}(n-1).    
\end{equation*}
Furthermore, denote the level sets of $u$ by $\Sigma_{s}$, $0<s\leq r.$ By a straightforward computation, the mean curvature of $\Sigma_s$ is $H_s=(n-1)\coth s$, besides, the area element $A(s)$ of $\Sigma_{s}$ satisfies the initial value problem
\begin{equation*}
A'(s)=(n-1)\coth s A(s), \quad A(0)=0.
\end{equation*}
\end{example}
These four examples are Einstein manifolds (i.e., $Ric=\frac{S}{n}g$) with connected boundaries and are inspired by two particular types of metrics discussed in our introduction, namely, static and V-static metrics. We highlight that a vast area of research has tried to obtain the rigidity of such metrics to one of these examples, given certain geometric constraints. For instance, a long-standing and famous conjecture proposed by Boucher-Gibbons-Horowitz~\cite{BGH} asks if the only $n$-dimensional compact static metric with positive scalar curvature and connected boundary is given by a standard round hemisphere from Example~\ref{hemisferio} by taking $(\alpha,\beta,\mu)=(1,1,-1)$. See also \cite{Ambrozio} and \cite{MT2} to know some rigidity results on static and $V$-static metrics, respectively. To obtain examples having disconnected boundary, we refer the reader to~\cite[Theorem~1]{Ambrozio}. For more details on Examples~\ref{geodesicballsn} and \ref{geodesicballHn}, see proof of Proposition~\ref{sinalgamma}. 

\section{Einstein manifold case}

It is known that the existence of concircular fields in Riemannian manifolds provides an important geometric assumption. The term concircular is originated from conformal transformations that preserve the geodesic circles, called concircular transformations themselves, see Yano~\cite{yano2}. Indeed, this condition has been heavily studied, especially in the case of manifolds without boundary, see, e.g., Tashiro~\cite{Tashiro}. In the Riemannian manifolds with nonempty boundaries, some similar problems also have been considered as, e.g., in the Reilly's work~\cite{reilly1,reilly2}. A remarkable fact is that, when $\alpha=0$ and $\Lambda$ is a constant, the gradient of the function $u$ provides a concircular field, see \eqref{1-1}. In a more general way, under the Einstein assumption, we prove that this function provides a special scalar concircular field on $(M^n,g)$. After this, we work to prove the rigidity of hemispheres and geodesic balls on the class of gradient Einstein-type manifolds which are Einstein manifolds.

\begin{lemma}\label{lemma1}
Let $(M^n,g)$ be a gradient Einstein-type manifold with boundary $\Sigma$. If $(M^n,g)$ is an Einstein manifold, then the function $u$ provides a special concircular field on $(M^n,g)$. More precisely, 
\begin{equation}\label{EqConCirc}
\nabla^2 u = \Big(-\frac{S}{n(n-1)}u+\gamma\Big)g,
\end{equation}
with $u>0$ in $int(M)$ and $u=0$ on $\Sigma.$
\end{lemma}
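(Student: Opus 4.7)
The plan is to exploit the Einstein hypothesis to reduce the right-hand side of \eqref{1-1} to a pure scalar multiple of the metric, and then apply the classical divergence argument for a Hessian that is proportional to $g$.

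First, I would observe that since $(M^n,g)$ is Einstein with $n\geq 3$, Schur's lemma forces the scalar curvature $S$ to be constant, and $Ric=\frac{S}{n}g$. Substituting into \eqref{1-1} yields
\begin{equation*}
\nabla^2 u = \psi\, g, \qquad \psi := \frac{\mu}{\beta}\Big(\Lambda-\frac{\alpha S}{n\beta}\Big)u+\gamma.
\end{equation*}
Thus the problem becomes: show that $\psi$ must have the form $\psi=-\frac{S}{n(n-1)}u+\gamma$, which is equivalent to pinning down the coefficient $\frac{\mu}{\beta}(\Lambda-\frac{\alpha S}{n\beta})$.

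Next, I would take the divergence of the tensor identity $\nabla^2u=\psi g$. On the right we get $\nabla_j\psi$. On the left, using the standard commutator $\nabla^i\nabla_i\nabla_j u=\nabla_j(\Delta u)+R_{jk}\nabla^k u$ together with the trace $\Delta u=n\psi$, we obtain
\begin{equation*}
n\,\nabla_j\psi + R_{jk}\nabla^k u = \nabla_j\psi,
\end{equation*}
so $(n-1)\,d\psi=-Ric(\nabla u,\cdot)$. Plugging in the Einstein condition $Ric=\frac{S}{n}g$ gives $(n-1)\,d\psi=-\frac{S}{n}\,du$, and since $M$ is connected, integration yields $\psi=-\frac{S}{n(n-1)}u+C$ for some constant $C$.

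To identify $C$, I would evaluate on $\Sigma$: by Proposition~\ref{prop1} the boundary is nonempty, and on $\Sigma$ we have $u=0$, while \eqref{1-1} forces $\nabla^2u|_\Sigma=\gamma g|_\Sigma$, i.e.\ $\psi|_\Sigma=\gamma$. Hence $C=\gamma$, which gives exactly \eqref{EqConCirc}. The whole argument is essentially the classical concircular-field computation (Yano--Tashiro); the only real step is the divergence identity that relates $d\psi$ to $Ric(\nabla u,\cdot)$, and I do not foresee a serious obstacle since the Einstein hypothesis makes the right-hand side an exact differential. A small point worth double-checking is the sign convention in the commutator formula, but the boundary normalization $u=0,\psi=\gamma$ then fixes any ambiguity in the constant of integration.
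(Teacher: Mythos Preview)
Your proof is correct and follows essentially the same route as the paper: both arguments substitute $Ric=\frac{S}{n}g$ into \eqref{1-1} to get $\nabla^2u=\psi g$, apply the Bochner/divergence identity $\mathrm{div}\,\nabla^2u=\nabla(\Delta u)+Ric(\nabla u)$ to deduce that $(n-1)\,d\psi=-\frac{S}{n}\,du$, and then fix the integration constant using $u|_\Sigma=0$. The only cosmetic difference is that the paper carries the expression $\frac{\mu}{\beta}(\Lambda-\frac{\alpha S}{n\beta})u$ through the computation and evaluates the constant before dividing by $u$, whereas you work directly with $\psi$; your reference to Proposition~\ref{prop1} for nonemptiness of $\Sigma$ is unnecessary since that is a standing hypothesis, but this does not affect the argument.
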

\begin{proof}
If $(M^n,g)$ is an Einstein manifold, then from~\eqref{1-1} we have
\begin{equation}\label{1-4}
\nabla^2 u = \left[\frac{\mu}{\beta}\Big(\Lambda u-\frac{\alpha}{\beta}\frac{S}{n}u\Big)+\gamma\right] g \quad\mbox{and}\quad \Delta u = \left[\frac{\mu}{\beta}\Big(\Lambda u-\frac{\alpha}{\beta}\frac{S}{n}u\Big)+\gamma\right]n.
\end{equation}
Using the classical Bochner's formula
\begin{equation*}
Ric(\nabla \psi)+\nabla\Delta \psi =  div\nabla^2\psi
\end{equation*}
and from \eqref{1-4} we obtain
\begin{equation*}
\frac{S}{n}\nabla u+\frac{\mu}{\beta}\nabla\Big(\Lambda n u-\frac{\alpha}{\beta}S u\Big)= \frac{\mu}{\beta}\nabla\Big(\Lambda u-\frac{\alpha}{\beta}\frac{S}{n}u\Big).
\end{equation*}
Then, by connectedness of $M^n$,
\begin{equation*}
\frac{S}{n} u+\frac{\mu}{\beta}\left[\Lambda(n-1) -\frac{\alpha}{\beta}\frac{(n-1)S}{n}\right]u = C,
\end{equation*}
for some constant $C$, and as $u=0$ on $\Sigma$, then $C$ must be zero. Thus, since $u>0$ in $\mbox{int}(M)$, we get
\begin{equation}\label{eqespecial}
\frac{\mu}{\beta}\Big(\Lambda-\frac{\alpha}{\beta}\frac{S}{n} \Big)=-\frac{S}{n(n-1)}.
\end{equation}
Replacing \eqref{eqespecial} in the first equation of \eqref{1-4} we conclude the proof.
\end{proof}

Now, let $p\in M^n$ be an interior maximum point of $u$ so that $\nabla u(p)=0$, and take a unit speed geodesic $\sigma(s)$ emanating from $p$.
Moreover, without loss of generality, we assume $Ric=\kappa (n-1)g$, $\kappa\in\left\{-1,0,1\right\}$, in Lemma~\ref{lemma1} to see that
\begin{equation*}
u''(\sigma(s))=\nabla^2u(\sigma'(s),\sigma'(s))=-\kappa u(\sigma(s))+\gamma, \quad u(\sigma(0))=u(p).
\end{equation*}
Solving this initial value problem, one has
\begin{equation}\label{edo}
u(\sigma(s))=\left\{\begin{array}{lll}
\frac{\gamma}{2}s^{2}+u(p),& if & \kappa=0,\\
(u(p)-\gamma)\cos s+\gamma,& if & \kappa=1,\\
(u(p)+\gamma)\cosh s-\gamma,& if & \kappa=-1.
\end{array}\right.
\end{equation}
Take $r_{0}=d(p,\Sigma)$ and the geodesic ball $B(p,r_0)\subset M^n$ of radius $r_0$ centered at $p$, recall that $r_0$ must be less than $\pi$ for $\kappa=1$ case, see \cite[Theorem~11.16]{lee}. For each $s_{0}\in(0,r_{0}]$ consider the geodesic sphere $\Sigma_{s_{0}}$ of radius $s_{0}$. In the next result, we characterize the constant $\gamma$ by means of $u(p)$, and compute the mean curvature $H_{s_0}$ of each $\Sigma_{s_{0}}.$
\begin{proposition}\label{sinalgamma}
Let $(M^n,g)$ be a gradient Einstein-type manifold with boundary $\Sigma$. If $(M^n,g)$ is an Einstein manifold with $Ric=\kappa(n-1)g$, $\kappa\in\left\{-1,0,1\right\}$, and $p\in M^n$ is an interior maximum point of $u$, then
$$\left\{\begin{array}{lll}
\gamma< 0& if & \kappa=0,\\
\gamma<u(p)& if & \kappa=1,\\
\gamma<-u(p)& if & \kappa=-1.
\end{array}\right.$$
Furthermore, the mean curvature of each $\Sigma_{s_{0}}$ is
$$H_{s_{0}}=\left\{\begin{array}{lll}
\frac{n-1}{s_{0}}& if & \kappa=0,\\
(n-1)\cot s_{0}& if & \kappa=1,\\
(n-1)\coth s_{0}& if & \kappa=-1.
\end{array}\right.$$
\end{proposition}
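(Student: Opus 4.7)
The plan is to combine Lemma~\ref{lemma1} with the Einstein condition $Ric=\kappa(n-1)g$ (so $S=n(n-1)\kappa$) to reduce \eqref{EqConCirc} to $\nabla^2 u=(-\kappa u+\gamma)g$. Evaluating this on $\sigma'(s)$ for a unit speed geodesic $\sigma$ emanating from $p$ yields the initial value problem $u''=-\kappa u+\gamma$, $u(0)=u(p)$, $u'(0)=0$, whose solutions are exactly those displayed in \eqref{edo}. The key observation is that this ODE depends only on $s$, so $u\circ\sigma$ is independent of the initial direction $\sigma'(0)$. Consequently $u$ is radial around $p$, $|\nabla u|=|u'(s)|$ on the geodesic sphere $\Sigma_{s_0}$, and the level sets $\{u=\text{const}\}$ coincide with the geodesic spheres $\Sigma_{s_0}$.

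To obtain the inequalities on $\gamma$, I will pick $r_0=d(p,\Sigma)$ and a minimizing geodesic $\sigma:[0,r_0]\to M$ from $p$ to $\Sigma$, so that $u(\sigma(r_0))=0$ while $u(\sigma(s))\leq u(p)$ on $[0,r_0]$ because $p$ is an interior maximum. In the case $\kappa=0$, the equality $\frac{\gamma}{2}r_0^2+u(p)=0$ forces $\gamma<0$. In the case $\kappa=1$, setting $s=r_0$ in \eqref{edo} gives $(u(p)-\gamma)\cos r_0+\gamma=0$; combined with $u(\sigma(s))\leq u(p)$, which rewrites as $(u(p)-\gamma)(\cos s-1)\leq 0$, this forces $u(p)-\gamma\geq 0$, and the equality $\gamma=u(p)$ is excluded because it would make $u$ constant along $\sigma$. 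The case $\kappa=-1$ is analogous, using $(u(p)+\gamma)(\cosh s-1)\leq 0$ to force $u(p)+\gamma\leq 0$ and ruling out equality.

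For the mean curvature formula I will apply the umbilical computation in the proof of Proposition~\ref{prop1} to the level set $\Sigma_{s_0}=\{u=u(\sigma(s_0))\}$ with outward unit normal $\nu=-\nabla u/|\nabla u|$. Since $\nabla^2 u=(-\kappa u+\gamma)g$ holds on all of $M$, the same chain of identities now yields $\mathcal{A}=\frac{\kappa u(\sigma(s_0))-\gamma}{|\nabla u|}g_{\Sigma_{s_0}}$, hence $H_{s_0}=(n-1)\frac{\kappa u(\sigma(s_0))-\gamma}{|u'(s_0)|}$. Substituting the explicit solutions from \eqref{edo} and the corresponding derivatives $u'(s)=\gamma s$, $u'(s)=-(u(p)-\gamma)\sin s$, $u'(s)=(u(p)+\gamma)\sinh s$, together with the sign information on $\gamma$ just established (which pins down $|u'(s_0)|$), gives $H_{s_0}=(n-1)/s_0$, $(n-1)\cot s_0$, and $(n-1)\coth s_0$ in the three cases respectively.

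The main obstacle is not the algebra but making sure the auxiliary ingredients are rigorously in place: that $u$ is genuinely radial so that level sets of $u$ are geodesic spheres (a uniqueness-of-ODE argument), that $\nabla u\neq 0$ on $\Sigma_{s_0}$ for $0<s_0\leq r_0$ (so the level-set computation is valid — this follows because $u'(s)$ has a single sign on $(0,r_0]$ in each case, given the bounds on $\gamma$), and that the maximum principle applied to an interior maximum $p$ indeed yields the one-sided inequality $u(\sigma(s))\leq u(p)$ used above. Once those points are settled the proof reduces to careful bookkeeping of signs.
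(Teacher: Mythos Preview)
Your proof is correct and follows essentially the same approach as the paper: reduce to the concircular equation $\nabla^2 u=(-\kappa u+\gamma)g$ via Lemma~\ref{lemma1}, solve the radial ODE along geodesics from $p$ to obtain~\eqref{edo}, use the maximum at $p$ (and the boundary value $u=0$) to pin down the sign of $u(p)-\gamma$, $\gamma$, or $u(p)+\gamma$, and then compute the second fundamental form of $\Sigma_{s_0}$ from $\nabla^2 u=(-\kappa u+\gamma)g$ exactly as in Proposition~\ref{prop1}. The only cosmetic difference is that for $\kappa=0$ you read off $\gamma<0$ directly from $\frac{\gamma}{2}r_0^2+u(p)=0$, whereas the paper argues all three cases uniformly from the strict inequality $u(\sigma(s))<u(p)$; both are valid.
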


\begin{proof} 
For the first part, we initially consider the $\kappa=1$ case. Since $p$ is a maximum point of $u$, $r_0<\pi$, and analysing $\nabla u(\sigma(s))$ from \eqref{edo} we get the strict inequality
\begin{equation*}
u(p)>u(\sigma(s))=(u(p)-\gamma)\cos s+\gamma,
\end{equation*}
for all $s\in(0,r_{0}].$ So,
\begin{equation*}
u(p)(1-\cos s)>\gamma(1-\cos s),    
\end{equation*}
and as $1-\cos s>0$, we conclude the result. For the case of $\kappa=-1$, again from \eqref{edo} one has
\begin{equation*}
u(p)(1-\cosh s)>-\gamma(1-\cosh s),
\end{equation*}
for all $s\in(0,r_{0}]$, and as $1-\cosh s<0$, we obtain the result. The $\kappa=0$ case is also very similar.

For the second part, we consider $\Sigma_{s_{0}}$ as above, and we prove the $\kappa=-1$ case, since the other cases are obtained by similarity. From \eqref{edo} we get
\begin{equation}\label{RC}
\langle\nabla u(\sigma(s_0)),\sigma'(s_0)\rangle=(u(p)+\gamma)\sinh(s_{0})\neq 0,    
\end{equation}
for all $s_{0}\in(0,r_{0}]$. It follows from Gauss lemma that $\Sigma_{s_{0}}$ is a hypersurface of $M^n$ and the vector fields $\nabla u(\sigma(s_0))$ and $\sigma'(s_0)$ are proportional, besides, $\nu=-\frac{\nabla u(\sigma(s_0))}{|\nabla u(\sigma(s_0))|}$ is a unit normal vector field on $\Sigma_{s_{0}}.$ Let $\mathcal{A}_{s_0}$ be the second fundamental form of $\Sigma_{s_0}.$ By Lemma~\ref{lemma1} one has $\nabla^{2}u=(u+\gamma)g$, then we conclude (analogous to~\eqref{Cal-A}) that
\begin{equation*}
\mathcal{A}_{s_0}(X,Y)=-|\nabla u(\sigma(s_0))|^{-1}(u(\sigma(s_{0}))+\gamma)g_{\Sigma_{s_0}}(X,Y),
\end{equation*}
for all $X,Y\in\mathfrak{X}(\Sigma_{s_{0}})$. Hence
\begin{equation}\label{meancurvature}
H_{s_{0}}=-(n-1)|\nabla u(\sigma(s_0))|^{-1}(u(\sigma(s_{0}))+\gamma).
\end{equation}
As $\nabla u(\sigma(s_0))$ and $\sigma'(s_0)$ are proportional, and using the characterization of $\gamma$ by means $u(p)$ obtained in the first part of this proposition, we have from \eqref{RC} that
\begin{equation}\label{mean1}
|\nabla u(\sigma(s_0))|=|u(p)+\gamma|\sinh(s_{0})=-(u(p)+\gamma)\sinh(s_{0}).
\end{equation}
On the other hand, by \eqref{edo} we have
\begin{equation}\label{mean2}
u(\sigma(s_{0}))+\gamma=(u(p)+\gamma)\cosh s_{0}.
\end{equation}
Replacing \eqref{mean1} and \eqref{mean2} into \eqref{meancurvature}, we obtain the desired result.
\end{proof}

Having completed our preliminary discussion, we are now ready to prove a complete classification of gradient Einstein-type manifolds that are Einstein manifolds. Indeed, the Einstein case is not surprising, since by dint of Lemma~\ref{lemma1} the Hessian is proportional to the metric, which is well known to be a very strong property, forcing the manifold to be a warped product. Formally, take $p$ an interior maximum point of $u$ as early and define
$$\bar{u}(x)=\frac{u(x)-u(p)}{\gamma-\kappa u(p)},$$
which is possible in view of Proposition \ref{sinalgamma}. So,
\begin{equation*}
\nabla^{2}\bar{u}=(-\kappa \bar{u}+1)g=\lambda g.
\end{equation*}
Moreover $\bar{u}(p)=0$, $d\bar{u}|_{p}=0$ and $\lambda(p)=1$. Now, we are in a position to use the Brinkmann's result~\cite{BR} (see specifically the proof of Theorem 4.3.3 in the Petersen's book \cite{Petersen}) to guarantee that, in a neighbourhood of $p$, the metric assumes the form
$$g=dr^{2}+\rho^{2}(r)ds^{2}_{n-1},$$
where 
$$\rho(r)=\bar{u}'(r)=\frac{u'(r)}{\gamma-\kappa u(p)}.$$
Thus, \eqref{edo} implies
$$g=\left\{\begin{array}{lll}
dr^{2}+r^{2} ds^{2}_{n-1}& if & \kappa=0,\\
dr^{2}+\sin^{2}r  ds^{2}_{n-1}& if & \kappa=1,\\
dr^{2}+\sinh^{2}r ds^{2}_{n-1}& if & \kappa=-1.
\end{array}\right.$$
This property extends to the entire manifold due to analyticity. In fact, similar equations to \eqref{1-1} can be recast as an elliptic system for $(u,g)$ in harmonic coordinates, implying that $u$ and $g$ are analytic. This result is well known and can be found in \cite[Theorem 5.26]{besse} (for Einstein metrics), \cite[Proposition 2.4]{HPW1} (for $m$-quasi Einstein metrics), and \cite[Proposition 2.8]{corvino} (for static metrics). In either case, this immediately proves the next two theorems that, for the sake of convenience, we write separately in the cases $\gamma\neq 0$ and $\gamma=0$ as follows.

\begin{theorem}\label{teogammaneq0}
Let $(M^{n},g)$ be a compact gradient Einstein-type manifold with connected boundary $\Sigma$. If $(M^{n},g)$ is an Einstein manifold, and $\gamma\neq 0$, then it is isometric to a geodesic ball in a simply connected space form.
\end{theorem}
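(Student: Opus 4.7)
The plan is to leverage the machinery already assembled in Lemma~\ref{lemma1} and Proposition~\ref{sinalgamma}, together with the classical Brinkmann/Tashiro rigidity for manifolds carrying a nontrivial concircular field, and finally propagate the resulting warped-product structure across all of $M^n$ by analyticity. After normalizing $Ric=\kappa(n-1)g$ with $\kappa\in\{-1,0,1\}$ (which is harmless since $(M^n,g)$ is Einstein, and since in the Einstein case $S$ is constant), the key input from Lemma~\ref{lemma1} is that $u$ itself is a concircular field: $\nabla^2 u=(-\kappa u+\gamma)g$.

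The first step is to produce an interior critical point of $u$. Since $M^n$ is compact and $u$ vanishes on $\Sigma$ while being strictly positive in the interior, $u$ attains an interior maximum at some $p\in\mathrm{int}(M)$, and $\nabla u(p)=0$. Next, I would normalize: set
\begin{equation*}
\bar u(x)=\frac{u(x)-u(p)}{\gamma-\kappa u(p)},
\end{equation*}
noting that Proposition~\ref{sinalgamma} guarantees $\gamma-\kappa u(p)\neq 0$ in each of the three cases, so the denominator is nonzero. A direct computation from \eqref{EqConCirc} shows that $\bar u$ satisfies $\nabla^2\bar u=(-\kappa\bar u+1)g=\lambda g$ with $\bar u(p)=0$, $d\bar u|_p=0$, and $\lambda(p)=1$.

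Now I would apply Brinkmann's theorem (in the form given in Petersen's book, Theorem~4.3.3): a manifold with a nonconstant function whose Hessian is a multiple of the metric and which has a critical point with $\lambda\neq 0$ there is locally a warped product $dr^2+\rho(r)^2 ds^2_{n-1}$ centered at that critical point, with $\rho(r)=\bar u'(r)$. Plugging into the ODE \eqref{edo} for the three curvature regimes yields warping functions $r$, $\sin r$, or $\sinh r$ respectively, so a neighborhood of $p$ is isometric to the corresponding model form (Euclidean, spherical, hyperbolic).

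The only remaining and genuinely delicate point is globalization: the warped-product form has so far only been established in a normal neighborhood of $p$. I would extend it to all of $M^n$ by invoking analyticity, exactly as the excerpt indicates: PDE~\eqref{1-1}, read as an elliptic system for $(u,g)$ in harmonic coordinates, forces both $u$ and $g$ to be real analytic (using the references cited in the paper). Since the warped-product identity is an analytic relation that holds on an open set, it must hold on all of $M^n$; together with Proposition~\ref{prop1} identifying $\Sigma=\{u=0\}$ as a regular level set with prescribed umbilic structure, this identifies $M^n$ with the geodesic ball of radius $r_0=d(p,\Sigma)$ in the simply connected space form of constant curvature $\kappa$, finishing the proof. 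The main obstacle is precisely this analytic continuation, which one has to quote from the literature rather than argue by hand.
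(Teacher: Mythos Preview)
Your proposal is correct and follows essentially the same route as the paper's proof: normalize $u$ to $\bar u$ using Proposition~\ref{sinalgamma}, invoke Brinkmann's theorem (via Petersen, Theorem~4.3.3) at the interior maximum $p$ to obtain the local warped-product form with warping functions $r$, $\sin r$, or $\sinh r$, and then extend globally by the analyticity of $(u,g)$ coming from the elliptic system in harmonic coordinates. The paper also records an alternative argument (in its concluding remarks) via Bishop--Gromov volume comparison using the explicit mean curvatures from Proposition~\ref{sinalgamma}, but your approach coincides with the paper's primary proof.
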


Notice that Theorem~\ref{teogammaneq0} is an extension to a more general class of metrics of a known result in the context of $V$-static metrics by Miao and Tam~\cite{MT2}.

\begin{theorem}\label{gamma0}
Let $(M^{n},g)$ be a compact gradient Einstein-type manifold with connected boundary $\Sigma.$ If $(M^{n},g)$ is an Einstein manifold, and $\gamma=0$, then it is isometric to a hemisphere of a round sphere. 
\end{theorem}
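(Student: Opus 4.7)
The plan is to specialise the discussion preceding Theorem~\ref{teogammaneq0} to the case $\gamma=0$, and use Proposition~\ref{sinalgamma} to pin down the Einstein constant. I would pick an interior maximum $p$ of $u$ (which exists because $u>0$ in $int(M)$ and $u=0$ on the nonempty $\Sigma$) and apply Proposition~\ref{sinalgamma}: the forced inequalities $\gamma<0$ (for $\kappa=0$) and $\gamma<-u(p)<0$ (for $\kappa=-1$) are both incompatible with $\gamma=0$, so one must have $\kappa=1$, i.e., $Ric=(n-1)g$. Hence $M^n$ is automatically an Einstein manifold of positive constant Ricci curvature equal to that of the round sphere.

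With $\kappa=1$ and $\gamma=0$ fixed, the ODE in \eqref{edo} specialises to $u(\sigma(s))=u(p)\cos s$ along every unit-speed geodesic emanating from $p$. This immediately yields $d(p,\Sigma)=\pi/2$, since $\cos s$ vanishes first at $s=\pi/2$ and $u>0$ in $int(M)$. Next I would run the Brinkmann argument spelled out in the excerpt: setting $\bar u=1-u/u(p)$ gives $\nabla^{2}\bar u=(1-\bar u)g$ with $\bar u(p)=0$ and $d\bar u|_{p}=0$, producing the local warped product structure $g=dr^{2}+\sin^{2}r\,ds^{2}_{n-1}$ on a neighbourhood of $p$. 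The analyticity of the pair $(u,g)$ in harmonic coordinates, cited from \cite{besse,HPW1,corvino}, then propagates this identity to all of $M^{n}$.

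The main obstacle is the global step: justifying that the manifold is exactly the closed hemisphere and not some proper subdomain with extra boundary components. Since $r\mapsto u(p)\cos r$ has first zero at $r=\pi/2$ along every radial geodesic from $p$, the open ball $B(p,\pi/2)$ embeds isometrically as an open round hemisphere with $u>0$ inside and $u=0$ on its boundary geodesic sphere. Connectedness of $\Sigma$ then forces this boundary geodesic sphere to coincide with $\Sigma$, yielding the isometry with the closed hemisphere; the resulting $\Sigma$ is totally geodesic, in agreement with Proposition~\ref{prop1} for $\gamma=0$.
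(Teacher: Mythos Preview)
Your argument is correct and follows essentially the same route as the paper's main proof preceding Theorems~\ref{teogammaneq0} and~\ref{gamma0}: use Proposition~\ref{sinalgamma} to force $\kappa=1$ when $\gamma=0$, run the Brinkmann warped-product argument on $\bar u$, and globalise via analyticity together with connectedness of $\Sigma$. The paper also gives a shorter alternative proof in its concluding remarks, bypassing Brinkmann entirely by invoking Reilly's result \cite[Lemma~3]{reilly1} directly once one has $\nabla^2 u=-\tfrac{S}{n(n-1)}u\,g$ with $S>0$.
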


It is worth mentioning that Examples~\ref{geodesicballRn}, \ref{geodesicballsn} and \ref{geodesicballHn} are explicit cases where Theorem~\ref{teogammaneq0} manifests, while for $\gamma=0$,  Example~\ref{hemisferio} is an explicit case where Theorem~\ref{gamma0} manifests.

The next step is to study conditions for a gradient Einstein-type manifold to be an Einstein manifold. This is the content of Sections~\ref{RbtEtm} and \ref{Sec-DNDC}. In this case, we immediately obtain from Theorems~\ref{teogammaneq0} and \ref{gamma0} rigidity results for geodesic balls and hemispheres by considering connectedness of the boundary.

\section{Rigidity and boundary topology of Einstein-type manifolds}\label{RbtEtm}

Our purpose here is, under some geometric assumptions, to prove rigidity results on a special class of Einstein-type manifolds. The first step is to establish an integral identity that provides a relation between the geometry of $(M^n,g)$ and its boundary $\Sigma$. We start by observing that
\begin{align*}
\stackrel\circ{\nabla^2u}&=\nabla^{2}u-\frac{\Delta u}{n}g =\frac{\mu}{\beta}u\big(\Lambda g-\frac{\alpha}{\beta}Ric\big)+\gamma g-\big[\frac{\mu}{\beta}u\big(\Lambda n-\frac{\alpha}{\beta}S\big)+\gamma n\big]\frac{g}{n}.
\end{align*}

If we define $\stackrel\circ{Ric}=Ric-\frac{S}{n}g,$ then the previous equation becomes
\begin{equation}\label{conformal}
\stackrel\circ{\nabla^2}u=-\frac{\alpha\mu}{\beta^{2}}u\stackrel\circ{Ric.}
\end{equation}

Now, we consider $\Sigma$ as the union $\cup_{i}\Sigma_{i}$ of its connected components $\Sigma_{i}.$ By Proposition~\ref{prop1}, we know that each function $\xi_{i}=|\nabla u|$ is a positive constant on $\Sigma_{i}.$ Let us denote the outward pointing unit normal vector field by $\nu,$ which satisfies $\nu=-\frac{\nabla u}{\xi_i}$ on $\Sigma_{i}.$

Notice that in the case of constant scalar curvature S, without loss of generality, we assume $S=\kappa n(n-1)$, $\kappa\in\left\{-1,0,1\right\}.$
\begin{proposition}\label{thmA}
Let $(M^n,g)$ be a compact gradient Einstein-type manifold of constant scalar curvature $S=\kappa n(n-1)$, $\kappa\in\left\{-1,0,1\right\}$, and with boundary $\Sigma = \cup_{i}\Sigma_{i}$. Then
\begin{equation*}
\sum_i\xi_i\int_{\Sigma_i}\big({\rm Ric}(\nu,\nu)-\kappa(n-1)\big)d\Sigma_i =\frac{\alpha\mu}{\beta^2}\int_{M}u\|\stackrel\circ{Ric}\|^2 dM.
\end{equation*}
\end{proposition}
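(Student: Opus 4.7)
The plan is to exploit identity~\eqref{conformal}, namely $\stackrel\circ{\nabla^2}u=-\frac{\alpha\mu}{\beta^{2}}u\stackrel\circ{Ric}$, by pairing both sides with $\stackrel\circ{Ric}$ and integrating over $M$. Since $\stackrel\circ{Ric}$ is trace-free, the inner product $\langle\stackrel\circ{\nabla^2}u,\stackrel\circ{Ric}\rangle$ coincides with $\langle\nabla^2 u,\stackrel\circ{Ric}\rangle$, so the right-hand side becomes $-\frac{\alpha\mu}{\beta^2}\int_M u\|\stackrel\circ{Ric}\|^2\, dM$ and the task reduces to evaluating $\int_M\langle\nabla^2 u,\stackrel\circ{Ric}\rangle\, dM$ by integration by parts.

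For the integration by parts, I would use the identity
\begin{equation*}
\dv\bigl(\stackrel\circ{Ric}(\nabla u)\bigr)=\langle\dv\stackrel\circ{Ric},\nabla u\rangle+\langle\stackrel\circ{Ric},\nabla^2 u\rangle.
\end{equation*}
The contracted second Bianchi identity gives $\dv(Ric)=\tfrac12 dS$, hence
\begin{equation*}
\dv(\stackrel\circ{Ric})=\dv(Ric)-\frac{1}{n}dS=\frac{n-2}{2n}dS,
\end{equation*}
which vanishes because $S$ is constant. Therefore $\langle\nabla^2 u,\stackrel\circ{Ric}\rangle=\dv(\stackrel\circ{Ric}(\nabla u))$, and the divergence theorem converts the interior integral into a boundary term $\int_\Sigma\langle\stackrel\circ{Ric}(\nabla u),\nu\rangle\, d\Sigma$.

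To finish, I would use Proposition~\ref{prop1}(ii): on each component $\Sigma_i$ the gradient satisfies $\nabla u=-\xi_i\nu$ with $\xi_i>0$ constant. This yields $\langle\stackrel\circ{Ric}(\nabla u),\nu\rangle=-\xi_i\stackrel\circ{Ric}(\nu,\nu)$, and by the assumption $S=\kappa n(n-1)$ one has $\stackrel\circ{Ric}(\nu,\nu)=Ric(\nu,\nu)-\kappa(n-1)$. Assembling the pieces produces
\begin{equation*}
-\sum_i\xi_i\int_{\Sigma_i}\bigl(Ric(\nu,\nu)-\kappa(n-1)\bigr)d\Sigma_i=-\frac{\alpha\mu}{\beta^2}\int_M u\|\stackrel\circ{Ric}\|^2\, dM,
\end{equation*}
which gives the claim after multiplying by $-1$.

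There is no genuine obstacle: the only delicate point is making sure the signs in $\nabla u=-\xi_i\nu$ and the identification $\stackrel\circ{Ric}(\nu,\nu)=Ric(\nu,\nu)-\tfrac{S}{n}$ are tracked consistently, and that $\dv(\stackrel\circ{Ric})=0$ is invoked via constancy of $S$. Everything else is a direct application of \eqref{conformal}, the divergence theorem, and Proposition~\ref{prop1}.
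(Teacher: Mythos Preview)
Your proof is correct and follows essentially the same approach as the paper: both combine identity~\eqref{conformal} with the divergence identity $\dv\bigl(\stackrel\circ{Ric}(\nabla u)\bigr)=\langle\dv\stackrel\circ{Ric},\nabla u\rangle+\langle\stackrel\circ{Ric},\nabla^2 u\rangle$, use the contracted Bianchi identity to handle $\dv\stackrel\circ{Ric}$, apply the divergence theorem, and evaluate the boundary term via $\nabla u=-\xi_i\nu$ on $\Sigma_i$. The only cosmetic difference is that the paper first records the general pointwise formula $\dv\bigl(\stackrel\circ{Ric}(\nabla u)\bigr)=\frac{n-2}{2n}\mathscr{L}_{\nabla u}S-\frac{\alpha\mu}{\beta^2}u\|\stackrel\circ{Ric}\|^2$ (valid without assuming $S$ constant, and reused later in Remark~\ref{Remark2}) before specializing, whereas you invoke constancy of $S$ from the outset.
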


\begin{proof}
First, we proceed as in Gomes~\cite{bjng} to prove the identity
\begin{equation}\label{EqMain}
div\big(\stackrel\circ{Ric}(\nabla u)\big)=\frac{n-2}{2n}\mathscr{L}_{\nabla u}S-\frac{\alpha\mu}{\beta^2}u\|\stackrel\circ{Ric}\|^2,
\end{equation}
which is true for any gradient Einstein-type metric (\ref{1-1}) with both $\beta$ and $\mu$ nonzero. Indeed, from the definition of divergence, we have
\begin{equation}\label{eqm1}
div\big(\stackrel\circ{Ric}(\nabla u)\big)=div(\stackrel\circ{Ric})(\nabla u) + \langle \nabla^2 u,\stackrel\circ{Ric}\rangle.
\end{equation}
From the second contracted Bianchi identity, we get
\begin{equation}\label{eqm2}
div(\stackrel\circ{Ric})(\nabla u)= \frac{n-2}{2n}\langle \nabla S,\nabla u\rangle.
\end{equation}
Since $\langle g,\stackrel\circ{Ric}\rangle=0$, from \eqref{conformal} we get
\begin{equation}\label{eqm3}
\langle \nabla^2 u,\stackrel\circ{Ric}\rangle = \langle\stackrel\circ{\nabla^2u},\stackrel\circ{Ric}\rangle =-\frac{\alpha\mu}{\beta^2}u\|\stackrel\circ{Ric}\|^2.
\end{equation}
Inserting \eqref{eqm2} and \eqref{eqm3} into \eqref{eqm1}, we immediately get equation~\eqref{EqMain}.

We now assume that $S=\kappa n(n-1)$ is constant and $M^n$ is compact. So, integrating equation~\eqref{EqMain}, we have
\begin{equation*}
\frac{\alpha\mu}{\beta^2}\int_{M}u\|\stackrel\circ{Ric}\|^2=-\int_{\Sigma}\stackrel\circ{Ric}(\nabla u,\nu).
\end{equation*}
Using that $\stackrel\circ{Ric}=Ric-\frac{S}{n}g$ and $\xi_i\nu=-\nabla u$ on  ${\Sigma_{i}},$ we complete our proof. 
\end{proof}

Proposition~\ref{thmA} is important in order to obtain rigidity results for $(M^n,g)$ just using suitable hypothesis on the boundary. In particular, for $\alpha\mu>0$ one has
\begin{equation*}
\sum_i\xi_i\int_{\Sigma_i}\big(Ric(\nu,\nu)-\kappa(n-1)\big)d\Sigma_i \geq 0
\end{equation*}
with equality holding if and only if $\stackrel\circ{Ric} = 0$. Note that this previous inequality is reverse for $\alpha\mu<0$ case. Thus, we immediately get the next proposition.
\begin{proposition}
Considering the same set up as in Proposition~\ref{thmA}. Then, $(M^{n},g)$ is an Einstein manifold provided that:
\begin{itemize}
\item[(i)]$\mbox{Ric}(\nu,\nu)\leq \kappa(n-1)$ along $\Sigma$ and $\alpha\mu> 0$, or
\item[(ii)]$\mbox{Ric}(\nu,\nu)\geq \kappa(n-1)$ along $\Sigma$ and $\alpha\mu< 0$.
\end{itemize}
\end{proposition}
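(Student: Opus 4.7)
The plan is to deduce the conclusion directly from the integral identity established in Proposition~\ref{thmA}, by a pure sign argument. Recall from Proposition~\ref{prop1}(ii) that each $\xi_i = |\nabla u|\big|_{\Sigma_i}$ is a strictly positive constant, and by the definition of a gradient Einstein-type manifold $u>0$ on $\mbox{int}(M)$, while $\|\stackrel\circ{Ric}\|^2 \geq 0$ pointwise. Thus in the identity
$$\sum_i \xi_i \int_{\Sigma_i}\bigl(\mbox{Ric}(\nu,\nu) - \kappa(n-1)\bigr)\, d\Sigma_i \;=\; \frac{\alpha\mu}{\beta^2}\int_M u\,\|\stackrel\circ{Ric}\|^2\, dM,$$
the sign of the right-hand side is controlled by the sign of $\alpha\mu$, while the sign of the left-hand side is controlled by the pointwise inequality hypothesized for $\mbox{Ric}(\nu,\nu)$ on $\Sigma$.

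In case (i), the assumption $\mbox{Ric}(\nu,\nu)\leq \kappa(n-1)$ along $\Sigma$ together with $\xi_i>0$ makes each boundary integrand nonpositive, so the left-hand side is $\leq 0$; simultaneously $\alpha\mu>0$ together with $u>0$ in the interior forces the right-hand side to be $\geq 0$. Both sides must therefore vanish, and since $u>0$ on $\mbox{int}(M)$ the vanishing of $\int_M u\,\|\stackrel\circ{Ric}\|^2$ yields $\stackrel\circ{Ric}\equiv 0$ on $\mbox{int}(M)$. By continuity of $\stackrel\circ{Ric}$, this extends to all of $M^n$, so $(M^n,g)$ is Einstein. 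Case (ii) is handled by the symmetric argument: the hypothesis reverses the sign of the LHS to be $\geq 0$, while $\alpha\mu<0$ reverses the RHS to be $\leq 0$, and again both sides vanish, yielding the Einstein conclusion.

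There is no real obstacle in this proof; all the analytic content is already packaged in Proposition~\ref{thmA}, so the only task is the sign-chasing above, together with the elementary remark that $u>0$ on a dense open subset of $M^n$ is enough to promote the vanishing of $u\,\|\stackrel\circ{Ric}\|^2$ to the vanishing of $\stackrel\circ{Ric}$ on the whole manifold.
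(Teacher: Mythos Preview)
Your proof is correct and follows essentially the same approach as the paper: the authors also derive the proposition directly from the integral identity of Proposition~\ref{thmA} via the obvious sign argument, noting that the right-hand side has the sign of $\alpha\mu$ (since $u>0$ in $int(M)$ and $\|\stackrel\circ{Ric}\|^2\geq0$) while the boundary hypothesis controls the sign of the left-hand side, forcing both to vanish and hence $\stackrel\circ{Ric}=0$.
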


The next two theorems can be viewed as a Chru$\acute{s}$ciel type inequality. 

\begin{theorem}\label{corovstatic}
Let $(M^n,g)$ be a compact gradient Einstein-type manifold of constant scalar curvature $S=\kappa n(n-1)$, $\kappa\in\left\{-1,0,1\right\}$, and with boundary $\Sigma=\cup_{i}\Sigma_{i}$. If $\alpha\mu<0$ ($\alpha\mu>0$), then
\begin{equation}\label{statineq}
\sum\limits_{i}\xi_i\int_{\Sigma_i}\Big(S_{\Sigma_i}-\kappa(n-2)(n-1)-\frac{n-2}{n-1}H_i^2\Big)d{\Sigma_i}\geq 0\,(\leq 0)
\end{equation}
with equality holding if and only if $(M^n,g)$ is an Einstein manifold.
\end{theorem}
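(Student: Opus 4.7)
The plan is to rewrite the integrand in Proposition~\ref{thmA} by converting the normal Ricci component $\mathrm{Ric}(\nu,\nu)$ into intrinsic/extrinsic data on each $\Sigma_i$ using the Gauss equation and the total umbilicity given by Proposition~\ref{prop1}(i).

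First, I would recall that for any oriented hypersurface $\Sigma^{n-1}$ of $M^n$ with mean curvature $H$ (the trace of $\mathcal{A}$) and second fundamental form $\mathcal{A}$, the contracted Gauss equation yields
\begin{equation*}
S_\Sigma = S - 2\,\mathrm{Ric}(\nu,\nu) + H^2 - \|\mathcal{A}\|^2,
\end{equation*}
so that, under the constant scalar curvature assumption $S=\kappa n(n-1)$,
\begin{equation*}
\mathrm{Ric}(\nu,\nu) - \kappa(n-1) = \tfrac{1}{2}\bigl(\kappa(n-1)(n-2) - S_\Sigma + H^2 - \|\mathcal{A}\|^2\bigr).
\end{equation*}
Next, on each connected component $\Sigma_i$, Proposition~\ref{prop1}(i) says that $\Sigma_i$ is totally umbilical with $\mathcal{A}=-\gamma\xi_i^{-1}g_{\Sigma_i}$; taking the trace gives $H_i=-(n-1)\gamma/\xi_i$ and consequently $\|\mathcal{A}\|^2 = H_i^2/(n-1)$ on $\Sigma_i$. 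Substituting this back produces the clean identity
\begin{equation*}
\mathrm{Ric}(\nu,\nu) - \kappa(n-1) = -\tfrac{1}{2}\Bigl(S_{\Sigma_i} - \kappa(n-2)(n-1) - \tfrac{n-2}{n-1}H_i^2\Bigr)
\end{equation*}
valid pointwise on $\Sigma_i$.

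Plugging this identity into the boundary integral of Proposition~\ref{thmA} gives
\begin{equation*}
\sum_i \xi_i \int_{\Sigma_i}\Bigl(S_{\Sigma_i} - \kappa(n-2)(n-1) - \tfrac{n-2}{n-1}H_i^2\Bigr) d\Sigma_i = -\frac{2\alpha\mu}{\beta^2}\int_M u\,\|\stackrel\circ{\mathrm{Ric}}\|^2\,dM.
\end{equation*}
Since $u\geq 0$ on $M$ and $\|\stackrel\circ{\mathrm{Ric}}\|^2\geq 0$, the right-hand side is nonnegative when $\alpha\mu<0$ and nonpositive when $\alpha\mu>0$, which yields inequality~\eqref{statineq} in both cases. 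Finally, because $u>0$ on $\mathrm{int}(M)$, the right-hand side vanishes precisely when $\stackrel\circ{\mathrm{Ric}}\equiv 0$ on $\mathrm{int}(M)$, and then by continuity on all of $M^n$, i.e., $(M^n,g)$ is Einstein; this handles the equality case.

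I do not expect a real obstacle here: the argument is essentially a bookkeeping exercise combining the Gauss equation with the umbilicity of the boundary, and the sign analysis then follows directly from Proposition~\ref{thmA}. The only step that requires mild care is ensuring the correct sign conventions for $\mathcal{A}$ and $H$ coming from the choice $\nu=-\nabla u/\xi_i$ made in the discussion preceding Proposition~\ref{thmA}, but since only $H_i^2$ and $\|\mathcal{A}\|^2$ appear in the final integrand, the orientation ambiguity is harmless.
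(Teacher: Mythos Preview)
Your proposal is correct and follows essentially the same route as the paper: both use the total umbilicity from Proposition~\ref{prop1} together with the contracted Gauss equation to rewrite $\mathrm{Ric}(\nu,\nu)-\kappa(n-1)$ in terms of $S_{\Sigma_i}$ and $H_i^2$, then invoke Proposition~\ref{thmA} for the sign and the equality case. Your write-up is slightly more explicit about the intermediate step $\|\mathcal{A}\|^2=H_i^2/(n-1)$, but the argument is the same.
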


\begin{proof}
By Proposition~\ref{prop1} each $\Sigma_i\subset M$ is a totally umbilical hypersurface with mean curvature $H_{i}$. So, using Gauss equation, one has
\[
2\left(Ric(\nu,\nu)-\kappa(n-1)\right)=\kappa(n-2)(n-1)+\frac{n-2}{n-1}H_i^2-S_{\Sigma_i}.
\]
The result then follows from Proposition~\ref{thmA}.
\end{proof}

In particular, since $\Sigma$ is totally geodesic when $\gamma=0$ (see Proposition~\ref{prop1}), we have the following result. 
\begin{theorem} \label{corostatic}
Let $(M^n,g)$ be a compact gradient Einstein-type manifold of constant scalar curvature $S=\kappa n(n-1)$, $\kappa\in\left\{-1,0,1\right\}$, and with boundary $\Sigma=\cup_{i}\Sigma_{i}$. If $\alpha\mu<0$ (or $\alpha\mu>0$) and $\gamma=0$, then
\begin{equation*}
\sum\limits_{i}\xi_i\int_{\Sigma_i}\Big(S_{\Sigma_i}-\kappa(n-2)(n-1)\Big)d{\Sigma_i}\geq0\,(\leq0)
\end{equation*}
with equality holding if and only if $(M^n,g)$ is an Einstein manifold.
\end{theorem}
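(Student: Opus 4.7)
The plan is to derive Theorem~\ref{corostatic} as an immediate corollary of Theorem~\ref{corovstatic} by exploiting the additional hypothesis $\gamma = 0$. The mechanism is the vanishing of the mean curvature term in~\eqref{statineq}, which follows from the totally geodesic character of the boundary.

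First, I would invoke Proposition~\ref{prop1}(i): since $(M^n,g)$ is a gradient Einstein-type manifold and $\gamma = 0$, every connected component $\Sigma_i$ is totally umbilical with second fundamental form $\mathcal{A}_i = -\gamma|\nabla u|^{-1}g_{\Sigma_i} = 0$. In particular, $\Sigma_i$ is totally geodesic, so its mean curvature $H_i$ vanishes identically on each $\Sigma_i$.

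Next, under the standing hypotheses (compactness of $M^n$, constant scalar curvature $S = \kappa n(n-1)$, and the sign condition on $\alpha\mu$), Theorem~\ref{corovstatic} applies verbatim and yields
\begin{equation*}
\sum_i \xi_i \int_{\Sigma_i}\Big(S_{\Sigma_i}-\kappa(n-2)(n-1)-\frac{n-2}{n-1}H_i^2\Big)\,d\Sigma_i \geq 0 \ (\text{resp.} \leq 0),
\end{equation*}
with equality precisely when $(M^n,g)$ is Einstein. Substituting $H_i \equiv 0$ from the previous step collapses the integrand to $S_{\Sigma_i}-\kappa(n-2)(n-1)$, giving the desired inequality. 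The equality characterization carries over without modification, since we have only removed a nonnegative (respectively nonpositive) contribution that was already zero.

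Consequently there is no genuine obstacle: the only subtle point is the bookkeeping of the sign in the $\alpha\mu > 0$ case, where the inequality in Theorem~\ref{corovstatic} is reversed but the vanishing of $H_i^2$ produces the correspondingly reversed inequality in the statement. No further integration or curvature identity is needed beyond what has already been established in Proposition~\ref{prop1} and Theorem~\ref{corovstatic}.
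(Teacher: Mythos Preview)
Your proposal is correct and matches the paper's approach exactly: the paper presents Theorem~\ref{corostatic} as an immediate specialization of Theorem~\ref{corovstatic}, noting only that $\Sigma$ is totally geodesic when $\gamma=0$ (by Proposition~\ref{prop1}), so that the $H_i^2$ term drops out. There is nothing to add.
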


It is worth mentioning that Theorem~\ref{corostatic} characterizes the standard hemisphere as the only gradient Einstein-type manifold $(M^n,g)$ with constant scalar curvature $n(n-1)$ whose boundary is isometric to a unit sphere. 

\begin{remark}\label{Remark2}
Analogous results to Theorems~\ref{corovstatic} and \ref{corostatic} are obtained under the weaker assumption that $\int_{M}\mathscr{L}_{\nabla u}S\,\geq 0,$ for $\alpha\mu<0,$ or $\int_{M}\mathscr{L}_{\nabla u}S\,\leq 0,$ for $\alpha\mu>0.$ Indeed, we can proceed as in the proof of Proposition~\ref{thmA} to get
\begin{equation*}
\frac{\alpha\mu}{\beta^2}\int_{M}u\|\stackrel\circ{Ric}\|^2=\frac{n-2}{n}\int_{M} \mathscr{L}_{\nabla u}S+\sum_{i}\xi_{i}\int_{\Sigma_{i}}\stackrel\circ{Ric}(\nu,\nu).   
\end{equation*}
Now, we can argument as in the proof of Theorems~\ref{corovstatic} and \ref{corostatic} to obtain the corresponding results.
We point out that this weaker assumption is closely related with the $P$ tensor, introduced in the context of $m$-quasi Einstein manifolds with nonempty boundary by He, Petersen and Wylie, see~\cite[Proposition~5.2]{HPW1}. However, even with this weaker assumption, in the case of equality we again obtain that the scalar curvature is constant.  
\end{remark}

The next result provides an upper bound for the area boundary $|\Sigma|$ of a gradient Einstein-type manifold. In this case, we use the weaker assumption given in Remark~\ref{Remark2}.

\begin{theorem}\label{areangeneral}
Let $(M^n,g)$ be a compact gradient Einstein-type manifold with boundary $\Sigma$ and $\alpha\mu<0$. If $(\Sigma,g_\Sigma)$ is an Einstein manifold of  scalar curvature $S_\Sigma$, with $\min S_\Sigma>0$, and the scalar curvature $S$ of $(M^n,g)$ satisfies $\int_{M}\mathscr{L}_{\nabla u}S\geq 0,$ then
\begin{equation}\label{area1}
|\Sigma|^{a}\left(\min_{\Sigma} S+\frac{n}{n-1}H^{2}\right)\leq n(n-1)\frac{\max S_{\Sigma}}{\min S_{\Sigma}}\omega_{n-1}^{a},
\end{equation}
where $a=2/(n-1)$ and $\omega_{n-1}$ is the volume of the unit $(n-1)$-sphere, with equality holding if and only if $S_\Sigma$ is constant and $(M^n,g)$ is an Einstein manifold. Moreover, for $n\geq 4$ estimate~\eqref{area1} reduces to
\begin{equation}\label{area1-1}
|\Sigma|^{a}\left(\min_{\Sigma} S+\frac{n}{n-1}H^{2}\right)\leq n(n-1)\omega_{n-1}^{a}.
\end{equation}
\end{theorem}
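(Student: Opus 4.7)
The plan is to combine three ingredients: the integral identity of Remark~\ref{Remark2}, the Gauss equation applied to the totally umbilical boundary from Proposition~\ref{prop1}, and Bishop's volume comparison theorem for $(\Sigma,g_\Sigma)$. The guiding idea is that the first two ingredients upgrade the scalar-curvature hypothesis on $M$ to the bound $\min_\Sigma S + \tfrac{n}{n-1}H^2 \leq \tfrac{n}{n-2}\max S_\Sigma$, while Bishop's theorem converts the positive lower bound on $S_\Sigma$ into $|\Sigma|^a\min S_\Sigma \leq (n-1)(n-2)\omega_{n-1}^a$; multiplying these produces precisely \eqref{area1}.

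For the first part, I would start from the identity
$$\frac{\alpha\mu}{\beta^2}\int_M u\,\|\stackrel\circ{Ric}\|^2 = \frac{n-2}{n}\int_M \mathscr{L}_{\nabla u}S + \sum_i \xi_i \int_{\Sigma_i}\stackrel\circ{Ric}(\nu,\nu)\,d\Sigma_i$$
recorded in Remark~\ref{Remark2}. The hypotheses $\alpha\mu<0$, $u\geq 0$, and $\int_M\mathscr{L}_{\nabla u}S\geq 0$ immediately give $\sum_i \xi_i \int_{\Sigma_i}\stackrel\circ{Ric}(\nu,\nu)\,d\Sigma_i \leq 0$. Next, Proposition~\ref{prop1} says each $\Sigma_i$ is totally umbilical with $\|\mathcal{A}_i\|^2=H_i^2/(n-1)$, so the Gauss identity $S_{\Sigma_i}=S-2\,\mathrm{Ric}(\nu,\nu)+H_i^2-\|\mathcal{A}_i\|^2$ combined with $\stackrel\circ{Ric}(\nu,\nu)=\mathrm{Ric}(\nu,\nu)-S/n$ rearranges to
$$\tfrac{2n}{n-2}\stackrel\circ{Ric}(\nu,\nu) = S + \tfrac{n}{n-1}H_i^2 - \tfrac{n}{n-2}S_{\Sigma_i}\quad\text{on }\Sigma_i.$$
Plugging this back and using that the (connected) boundary $\Sigma$ has constant mean curvature (Proposition~\ref{prop1}) yields
$$\int_\Sigma S\,d\Sigma + \tfrac{n}{n-1}H^2|\Sigma| \leq \tfrac{n}{n-2}\int_\Sigma S_\Sigma\,d\Sigma.$$
Estimating $\int_\Sigma S \geq |\Sigma|\min_\Sigma S$ and $\int_\Sigma S_\Sigma\leq|\Sigma|\max S_\Sigma$, then multiplying by $|\Sigma|^{a-1}$, I obtain
$$|\Sigma|^a\!\left(\min_\Sigma S + \tfrac{n}{n-1}H^2\right) \leq \tfrac{n}{n-2}\,|\Sigma|^a\max S_\Sigma.$$

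Second, I would control $|\Sigma|^a\max S_\Sigma$ through Bishop's volume comparison. Since $(\Sigma,g_\Sigma)$ is Einstein with $\mathrm{Ric}_\Sigma=\tfrac{S_\Sigma}{n-1}g_\Sigma\geq \tfrac{\min S_\Sigma}{n-1}g_\Sigma$ and $\min S_\Sigma>0$, comparison with the round $(n-1)$-sphere of sectional curvature $\min S_\Sigma/((n-1)(n-2))$ gives
$$|\Sigma|\leq\left(\tfrac{(n-1)(n-2)}{\min S_\Sigma}\right)^{\!(n-1)/2}\omega_{n-1},$$
equivalently $|\Sigma|^a\min S_\Sigma\leq(n-1)(n-2)\omega_{n-1}^a$. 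Rewriting $|\Sigma|^a\max S_\Sigma=|\Sigma|^a\min S_\Sigma\cdot(\max S_\Sigma/\min S_\Sigma)$ and substituting produces \eqref{area1}. For $n\geq 4$ the boundary has dimension at least $3$, so Schur's lemma forces $S_\Sigma$ to be constant, giving $\max S_\Sigma/\min S_\Sigma=1$ and reducing \eqref{area1} to \eqref{area1-1}.

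For the equality case I would trace the inequalities back: equality in Bishop's comparison forces $\Sigma$ to be isometric to a round sphere; equality in the averaging estimates forces $S|_\Sigma$ and $S_\Sigma$ to be constant; and equality in the identity of Remark~\ref{Remark2}, together with $u>0$ on $\mathrm{int}(M)$, forces $\stackrel\circ{Ric}\equiv 0$, i.e.\ $(M^n,g)$ is Einstein. The step I expect to require the most care is the bookkeeping that makes the coefficient $\tfrac{n}{n-2}$ from the Gauss identity mesh with the factor $(n-1)(n-2)$ from Bishop's theorem so that the two estimates compose exactly to the stated constant $n(n-1)$, and verifying that each equality case genuinely propagates through all three ingredients.
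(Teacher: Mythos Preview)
Your proof is correct and follows essentially the same route as the paper's: the paper likewise combines the integral identity of Remark~\ref{Remark2} with the Gauss equation on the totally umbilical boundary to obtain $\min_\Sigma S+\tfrac{n}{n-1}H^2\leq n(n-1)\varepsilon$ (their (3.15), your displayed bound with $\varepsilon=\max S_\Sigma/((n-1)(n-2))$), and then multiplies by the Bishop bound $|\Sigma|^a\leq\delta^{-1}\omega_{n-1}^a$. Your equality analysis is in fact slightly more explicit than the paper's, which is fine; the only cosmetic difference is that the paper packages the constants via $\delta,\varepsilon$ while you carry $\max S_\Sigma/\min S_\Sigma$ directly.
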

\begin{proof}
Let $Ric_{\Sigma}$ be the Ricci tensor of the metric $g_{\Sigma}$ on $\Sigma$ and $S_{\Sigma}$ its scalar curvature. Since $\min S_{\Sigma}>0$ and $\Sigma$ is compact, we can take the positive constants $\delta=\frac{\min  S_\Sigma}{(n-1)(n-2)}$ and $\varepsilon = \frac{\max  S_\Sigma}{(n-1)(n-2)}$ so that 
\begin{equation}\label{epsilon}
\delta (n-1)(n-2)\leq S_{\Sigma}\leq \varepsilon (n-1)(n-2).
\end{equation}
Then
\begin{equation}\label{delta}
Ric_{\Sigma}=\frac{S_\Sigma}{n-1}g_\Sigma\geq \delta (n-2)g_{\Sigma}.
\end{equation}
Hence, by Bishop's theorem, see, e.g., Chavel~\cite[Theorem~6 p.~74]{Chavel}, it is true that
\begin{equation}\label{area}
|\Sigma|\leq\delta^{-\frac{1}{a}}\omega_{n-1}. 
\end{equation}
where $\omega_{n-1}$ is the volume of an $(n-1)$-dimensional unit sphere. 
Since $\Sigma$ is connected, we can define $\xi=|\nabla u||_{\Sigma}$, which is a positive constant. Thus, using Remark~\ref{Remark2} and Proposition~\ref{prop1}, we obtain
\begin{equation}\label{ineq2'}
\frac{\alpha\mu}{\beta^{2}}\int_{M}u\|\stackrel\circ{Ric}\|^2\geq \frac{1}{2}\int_{\Sigma}\xi \Big(-S_{\Sigma}+\frac{n-2}{n}S+\frac{n-2}{n-1}H^{2}\Big),
\end{equation}
where we have used the Gauss equation to compute $\stackrel\circ{Ric}(\nu,\nu)$. Then, since $\alpha\mu<0$,
\begin{equation}\label{inequality2}
\int_{\Sigma}\left(\frac{n-2}{n}S+\frac{n-2}{n-1}H^{2}\right)\leq \int_{\Sigma}S_{\Sigma}.    
\end{equation}
From \eqref{inequality2} and \eqref{epsilon} we get
\begin{equation}\label{inequality3}
\min_{\Sigma} S+\frac{n}{n-1}H^{2}\leq n(n-1)\varepsilon.
\end{equation}
Thus, by using \eqref{inequality3} and \eqref{area} we obtain \eqref{area1}. Furthermore, equality holds if and only if $\Sigma$ is an Einstein manifold (see \eqref{ineq2'}) and the result follows from Theorem~\ref{gamma0}. Moreover, if $n\geq 4$, then $S_{\Sigma}$ is constant by Schur's lemma that implies~\eqref{area1-1}. 
\end{proof}

As an application of Theorem~\ref{areangeneral}, one can obtain new characterizations for hemispheres and geodesic balls in simply connected space forms. For this purpose, it is suffices to assume that the boundary is connected and to apply Theorems~\ref{teogammaneq0} and \ref{gamma0}.

\subsection{The dimension three case}\label{3-dimension}

Here, we obtain some topological constraints and classifications for the boundary. Again, we observe that in the case of constant scalar curvature $S$, without loss of generality, we can assume that $S=6\kappa$, $\kappa\in\left\{-1,0,1\right\}.$ In what follows $\chi(\Sigma)$ stands for the Euler characteristic of $\Sigma.$

We begin show how the sign of $\alpha\mu$ can lead to topological constraints for the boundary of a three-dimensional gradient Einstein-type manifold.

\begin{proposition}\label{topsphere}
Let $(M^3,g)$ be a compact gradient Einstein-type manifold with boundary $\Sigma=\cup_{i}\Sigma_{i}$ and constant scalar curvature $S=6\kappa$, $\kappa\in\left\{-1,0,1\right\}$.

\begin{itemize}
\item[(i)] If $\gamma\neq 0$ and  $\alpha\mu<0$ (for $\kappa=-1$, additionally suppose that $H_{i}>2$ in each $\Sigma_i$),  then there exists a connected component $\Sigma_{i}$ diffeomorphic to a $2$-sphere.
\item[(ii)] If $\gamma= 0$, $\alpha\mu<0$ and $\kappa=1$, then there exists a connected component $\Sigma_{i}$ diffeomorphic to a $2$-sphere.
\item[(iii)] If $\gamma\neq 0$, $\alpha\mu>0$, $\kappa=-1$ and $H_{i}\leq 2$ in each $\Sigma_i$,  then there exists a connected component $\Sigma_{i}$ diffeomorphic to a torus.
\item[(iv)] If $\gamma= 0$, $\alpha\mu>0$ and $\kappa\in\{0,-1\}$, then there exists a connected component $\Sigma_{i}$ diffeomorphic to a torus.
\end{itemize}  
\end{proposition}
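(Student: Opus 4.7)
The plan is to specialize Theorem~\ref{corovstatic} to $n=3$ and combine it with the Gauss--Bonnet theorem on each connected component of the boundary. In dimension three each $\Sigma_i$ is a compact orientable surface with $S_{\Sigma_i}=2K_{\Sigma_i}$, so $\int_{\Sigma_i}S_{\Sigma_i}\,d\Sigma_i=4\pi\chi(\Sigma_i)$. Moreover, Proposition~\ref{prop1} tells us that each $\Sigma_i$ is totally umbilical with constant mean curvature $H_i=-2\gamma/\xi_i$, so $H_i^2$ pulls out of any integral over $\Sigma_i$. With $S=6\kappa$, Theorem~\ref{corovstatic} then becomes
\begin{equation*}
\sum_i \xi_i\Big(4\pi\chi(\Sigma_i) - \big(2\kappa+\tfrac{1}{2}H_i^2\big)|\Sigma_i|\Big)\,\geq\, 0 \quad\text{when } \alpha\mu<0,
\end{equation*}
with the reverse inequality when $\alpha\mu>0$.

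Writing $c_i:=2\kappa+\tfrac{1}{2}H_i^2$, the entire proposition reduces to sign-tracking of $c_i$ under the four sets of hypotheses. For items (i) and (ii), where $\alpha\mu<0$, the assumptions are designed precisely so that $c_i>0$ for every $i$: in (i), $\kappa=1$ gives $c_i\geq 2$, $\kappa=0$ with $\gamma\neq 0$ forces $H_i\neq 0$ so $c_i=\tfrac{1}{2}H_i^2>0$, and $\kappa=-1$ with $H_i>2$ gives $c_i>0$; in (ii) we have $H_i=0$ but $\kappa=1$ already makes $c_i=2$. Consequently $4\pi\sum_i\xi_i\chi(\Sigma_i)\geq \sum_i\xi_i c_i|\Sigma_i|>0$, so some $\chi(\Sigma_j)>0$, meaning $\Sigma_j\cong S^2$. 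For items (iii) and (iv), where $\alpha\mu>0$, the reverse inequality applies and the hypotheses instead force $c_i\leq 0$ for every~$i$ (in (iii), $\kappa=-1$ and $H_i\leq 2$; in (iv), $H_i=0$ and $\kappa\in\{0,-1\}$). Then $4\pi\sum_i \xi_i\chi(\Sigma_i)\leq \sum_i\xi_i c_i|\Sigma_i|\leq 0$, producing some $\chi(\Sigma_j)\leq 0$, i.e., a component that is a torus.

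The main subtlety I anticipate concerns parts (iii) and (iv): the direct estimate only yields $\chi(\Sigma_j)\leq 0$, which \emph{a priori} is compatible with higher genus as well as the torus. I expect the ``torus'' conclusion is to be interpreted as the borderline non-spherical case corresponding to the sharp boundary of the sign hypothesis ($c_i=0$ forces $H_i=0$ together with $\kappa\in\{0,-1\}$, or $\kappa=-1$ with $H_i=2$), at which equality in Gauss--Bonnet pins $\chi(\Sigma_j)$ to $0$; away from that borderline only the weaker statement $\chi(\Sigma_j)\leq 0$ is available from this argument.
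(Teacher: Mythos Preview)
Your approach is essentially identical to the paper's: specialize Theorem~\ref{corovstatic} to $n=3$, apply Gauss--Bonnet on each $\Sigma_i$ to obtain
\[
4\pi\sum_i\xi_i\chi(\Sigma_i)\;\geq\;\sum_i\xi_i\Big(2\kappa+\tfrac{1}{2}H_i^2\Big)|\Sigma_i|
\]
(with the reverse inequality for $\alpha\mu>0$), and then read off the sign of some $\chi(\Sigma_j)$ from the sign of $c_i=2\kappa+\tfrac{1}{2}H_i^2$. The paper's own proof is precisely this, stated tersely and with ``analogous'' for (iii) and (iv).

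Your concern about parts (iii) and (iv) is well founded and in fact applies equally to the paper's proof: the inequality only yields some $\chi(\Sigma_j)\leq 0$, which guarantees a component of genus at least one, not specifically a torus. The paper does not address this point either. One minor additional caveat in (iii): the hypothesis $H_i\leq 2$ gives $c_i\leq 0$ only if $H_i\geq -2$ as well (otherwise $H_i^2>4$); presumably the intended reading is $|H_i|\leq 2$, or an orientation convention making $H_i>0$, but neither you nor the paper makes this explicit.
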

\begin{proof}
The condition $\alpha\mu<0$ together with Theorem~\ref{corovstatic} and Gauss-Bonnet theorem immediately implies
\begin{equation}\label{bghet}
 4\pi\sum\limits_{i}\xi_i\chi(\Sigma_{i})\geq \sum\limits_{i}\xi_i\left(2\kappa+\frac{1}{2}H_{i}^{2}\right)|\Sigma_{i}|.   
\end{equation}
Notice that, in all cases of $\gamma$ and $\kappa$ in (i) and (ii), we get $\chi(\Sigma_{i})>0$ for some $i$ and then $\Sigma_{i}$ is homeomorphic to a $2$-sphere. The proof of (iii) and (iv) are analogous.
\end{proof}

The previous topological restrictions for the boundary become more rigid when it is connected.

\begin{corollary}\label{coro1}
Let $(M^3,g)$ be a compact gradient Einstein-type manifold of constant scalar curvature $S=6\kappa$, $\kappa\in\left\{-1,0,1\right\}$, with $\gamma\neq 0$, $\alpha\mu<0$, and connected boundary $\Sigma$. Then $\Sigma$ is diffeomorphic to the $2$-sphere (assume $H>2$ for $\kappa=-1$) and
\[
|\Sigma|\leq 4\pi\big(\kappa+\frac{1}{4}H^2\big)^{-1}
\]
with equality holding if and only if $(M^3,g)$ is isometric to a geodesic ball in a simply connected space form.
\end{corollary}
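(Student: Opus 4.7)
The plan is to combine the topological content of Proposition~\ref{topsphere} with the integral inequality furnished by Theorem~\ref{corovstatic}, and then read off rigidity from the equality case of that inequality together with Theorem~\ref{teogammaneq0}. Since $\gamma\neq 0$, $\alpha\mu<0$, and (when $\kappa=-1$) $H>2$, Proposition~\ref{topsphere}(i) produces a connected component of $\Sigma$ diffeomorphic to $\mathbb{S}^2$, and connectedness of $\Sigma$ promotes this to $\Sigma\simeq\mathbb{S}^2$.

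Next I would extract the area bound. By Proposition~\ref{prop1}, $\Sigma$ is totally umbilical with second fundamental form $\mathcal{A}=-\gamma\,\xi^{-1}g_\Sigma$, so $H=-2\gamma/\xi$ is a (constant, nonzero) scalar along $\Sigma$. With $n=3$ and $S=6\kappa$, Theorem~\ref{corovstatic} in the $\alpha\mu<0$ regime reduces to
\[
\xi\int_\Sigma\Bigl(S_\Sigma-2\kappa-\tfrac{1}{2}H^2\Bigr)\,d\Sigma\geq 0.
\]
Since $\Sigma\simeq\mathbb{S}^2$, the Gauss--Bonnet theorem (together with $S_\Sigma=2K_\Sigma$ in dimension two) gives $\int_\Sigma S_\Sigma\,d\Sigma=4\pi\chi(\Sigma)=8\pi$. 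Dividing by the positive constant $\xi$ and rearranging yields
\[
\Bigl(\kappa+\tfrac{1}{4}H^2\Bigr)|\Sigma|\leq 4\pi,
\]
which is the claimed estimate after division by $\kappa+H^2/4$; this division is legal precisely because the extra hypothesis $H>2$ in the $\kappa=-1$ case guarantees $\kappa+H^2/4>0$ (and positivity is automatic when $\kappa\in\{0,1\}$).

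Finally, for the equality case, saturation in the displayed bound forces equality in Theorem~\ref{corovstatic}, which by that theorem's equality clause makes $(M^3,g)$ an Einstein manifold. Since $\gamma\neq 0$ and $\Sigma$ is connected, Theorem~\ref{teogammaneq0} then identifies $(M^3,g)$ with a geodesic ball in a simply connected space form. The converse direction is a direct verification using Examples~\ref{geodesicballRn}, \ref{geodesicballsn}, and~\ref{geodesicballHn}: in each case $\Sigma$ is a round sphere with constant mean curvature given by the formulas computed there, and one checks that $(\kappa+H^2/4)|\Sigma|=4\pi$.

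Essentially every ingredient is in place from the earlier sections, so I do not anticipate a genuine obstacle; the single delicate point is the sign of $\kappa+H^2/4$, handled exactly by the $H>2$ assumption matching the hypothesis in Proposition~\ref{topsphere}(i). The whole argument is really an exercise in packaging: topological conclusion from Proposition~\ref{topsphere}, quantitative inequality by feeding Gauss--Bonnet into Theorem~\ref{corovstatic} at $n=3$, and rigidity by chaining the equality clause with Theorem~\ref{teogammaneq0}.
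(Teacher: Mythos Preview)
Your proposal is correct and follows essentially the same route as the paper: invoke Proposition~\ref{topsphere} for the topology, feed Gauss--Bonnet into the $n=3$ case of Theorem~\ref{corovstatic} for the area bound, and combine the equality clause of Theorem~\ref{corovstatic} with Theorem~\ref{teogammaneq0} for rigidity. Your write-up is in fact slightly more explicit than the paper's (you spell out the chain Einstein $\Rightarrow$ geodesic ball via Theorem~\ref{teogammaneq0} and justify the positivity of $\kappa+\tfrac14H^2$ case by case), but the underlying argument is identical.
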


\begin{proof}
Since $\Sigma$ is connected, Proposition~\ref{topsphere} implies that $\Sigma$ is homeomorphic to a $2$-sphere, so, $\chi(\Sigma)=2$. Replacing this in \eqref{bghet}, we obtain the required area estimate. From Theorem~\ref{corovstatic}, we conclude that equality holds if and only if $(M^3,g)$ is isometric to a geodesic ball in a simply
connected space form.
\end{proof}

The next corollary gives a topological classification and an upper bound for the area boundary. The proof follows from analogous arguments as in Corollary~\ref{coro1}.

\begin{corollary}\label{coro3}
Let $(M^3,g)$ be a compact gradient Einstein-type manifold of constant scalar curvature $S=6\kappa$, $\kappa\in\left\{-1,0,1\right\},$ with $\gamma=0$, $\alpha\mu<0,$ and connected boundary $\Sigma.$ Then,
\begin{equation*}
2\pi\chi(\Sigma)\geq \kappa|\Sigma|.    
\end{equation*}
In particular, if $\kappa=1$, then $\Sigma$ is diffeomorphic to a sphere and
\begin{equation*}
|\Sigma|\leq 4\pi
\end{equation*}
with equality holding if and only if $(M^3,g)$ is isometric to a hemisphere of a round sphere.
\end{corollary}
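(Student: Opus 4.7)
The plan is to mimic the proof of Corollary~\ref{coro1}, but now using Theorem~\ref{corostatic} (the $\gamma=0$ version) rather than Theorem~\ref{corovstatic}. Specializing Theorem~\ref{corostatic} to dimension $n=3$ with $S=6\kappa$ and connected boundary $\Sigma$, the inequality reads
\[
\xi\int_{\Sigma}\bigl(S_{\Sigma}-2\kappa\bigr)\,d\Sigma\ \geq\ 0,
\]
where $\xi=|\nabla u|_\Sigma>0$. Since $\Sigma$ is a closed orientable surface, $S_\Sigma = 2K_\Sigma$ and the Gauss--Bonnet theorem gives $\int_\Sigma S_\Sigma\,d\Sigma = 4\pi\chi(\Sigma)$. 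Substituting yields
\[
4\pi\chi(\Sigma)\ \geq\ 2\kappa|\Sigma|,
\]
which is precisely the stated estimate $2\pi\chi(\Sigma)\geq \kappa|\Sigma|$.

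Next, I would specialize to $\kappa=1$. In that case the inequality gives $\chi(\Sigma)\geq |\Sigma|/(2\pi)>0$, and since $\Sigma$ is a closed connected orientable surface, only the $2$-sphere has positive Euler characteristic, so $\chi(\Sigma)=2$ and consequently $|\Sigma|\leq 4\pi$.

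For the rigidity part, I would trace the equality case back through Theorem~\ref{corostatic}: equality forces $\stackrel\circ{Ric}\equiv 0$, so $(M^3,g)$ is an Einstein manifold. Combined with the hypotheses $\gamma=0$ and $\Sigma$ connected, Theorem~\ref{gamma0} then identifies $(M^3,g)$ with a hemisphere of a round sphere. Conversely, the round hemisphere clearly saturates the area bound $|\Sigma|=4\pi$.

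I do not expect any real obstacle here; everything is a direct bookkeeping exercise combining the integral inequality of Theorem~\ref{corostatic} with Gauss--Bonnet in the $n=3$, $\gamma=0$ specialization. The only point requiring a bit of care is the equality discussion, where one must cite the connectedness hypothesis and Theorem~\ref{gamma0} to conclude the hemisphere rigidity rather than just ``$(M^3,g)$ is Einstein''.
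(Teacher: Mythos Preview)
Your proposal is correct and follows essentially the same route as the paper, which simply says the proof is ``from analogous arguments as in Corollary~\ref{coro1}'': specialize the integral inequality (Theorem~\ref{corostatic}, i.e.\ the $\gamma=0$, $H=0$ case of \eqref{bghet}) to $n=3$, apply Gauss--Bonnet to get $2\pi\chi(\Sigma)\geq\kappa|\Sigma|$, and for $\kappa=1$ use orientability of $\Sigma=\partial M$ to force $\chi(\Sigma)=2$, with equality traced back through the Einstein condition and Theorem~\ref{gamma0}.
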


\begin{remark}
It is worth mentioning that similar results can be obtained in the same way as in Corollaries~\ref{coro1} and \ref{coro3} by analysing $\alpha\mu>0$.
\end{remark}

\subsection{The dimension five case}
Here, we prove an upper bound for the area boundary in terms of its Euler characteristic. In fact, using the Gauss-Bonnet-Chern formula, we have the following result.
\begin{corollary}\label{teofivedimensional}
Let $(M^5,g)$ be a compact gradient Einstein-type manifold with connected boundary $\Sigma$ and $\alpha\mu<0$. If $(\Sigma,g_\Sigma)$ is an Einstein manifold, and the scalar curvature $S$ of $(M^n,g)$ satisfies $\int_{M}\mathscr{L}_{\nabla u}S\geq 0$ and $\displaystyle\min_{\Sigma}S+\frac{5}{4}H^{2}>0$, then
\[
8\pi^2\chi(\Sigma)\geq \frac{1}{24}\left(\frac{3}{5}\min_{\Sigma}S+\frac{3}{4}H^{2}\right)^2|\Sigma|\]
with equality holding if and only if $(M^5,g)$ is an Einstein manifold.
 
\end{corollary}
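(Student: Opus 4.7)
The plan is to combine the integral identity of Remark~\ref{Remark2} (in the form already used in the proof of Theorem~\ref{areangeneral}) with the Chern-Gauss-Bonnet formula in dimension four. The key observation is that, since $\Sigma$ is a closed $4$-dimensional Einstein manifold, its traceless Ricci vanishes and Chern-Gauss-Bonnet reduces to
\[
\chi(\Sigma)=\frac{1}{8\pi^2}\int_\Sigma\Bigl(\tfrac{1}{2}|W|^2+\tfrac{1}{24}S_\Sigma^2\Bigr)d\Sigma,
\]
so dropping the nonnegative Weyl term gives $8\pi^2\chi(\Sigma)\geq \tfrac{1}{24}S_\Sigma^2|\Sigma|$. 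The whole problem then reduces to producing a pointwise lower bound for $S_\Sigma$ in terms of $\min_\Sigma S$ and $H$.

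To obtain such a bound I would mimic the opening of the proof of Theorem~\ref{areangeneral}: apply Remark~\ref{Remark2} with $n=5$, and use that $\Sigma$ is connected (so $\xi=|\nabla u||_\Sigma$ is a single positive constant) to obtain
\[
\frac{\alpha\mu}{\beta^2}\int_M u\|\stackrel\circ{Ric}\|^2=\frac{3}{5}\int_M\mathscr{L}_{\nabla u}S+\xi\int_\Sigma\stackrel\circ{Ric}(\nu,\nu).
\]
The hypothesis $\alpha\mu<0$ makes the left-hand side nonpositive, while $\int_M\mathscr{L}_{\nabla u}S\geq 0$ makes the first right-hand term nonnegative, forcing $\int_\Sigma\stackrel\circ{Ric}(\nu,\nu)\leq 0$. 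Inserting the Gauss-equation identity $2\stackrel\circ{Ric}(\nu,\nu)=-S_\Sigma+\tfrac{3}{5}S+\tfrac{3}{4}H^2$ (valid because $\Sigma$ is totally umbilical by Proposition~\ref{prop1} and $H$ is constant on the connected $\Sigma$), and using Schur's lemma on the $4$-dimensional Einstein manifold $\Sigma$ to conclude that $S_\Sigma$ is constant, integration yields
\[
S_\Sigma\geq \tfrac{3}{5}\min_\Sigma S+\tfrac{3}{4}H^2,
\]
which is strictly positive by hypothesis. Squaring and substituting into the Chern-Gauss-Bonnet bound produces the stated inequality.

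For the equality case I would trace back through each step: equality forces $|W|\equiv 0$ on $\Sigma$, forces $S\equiv\min_\Sigma S$ on $\Sigma$, and forces $\int_\Sigma\stackrel\circ{Ric}(\nu,\nu)=0$; fed back into the integral identity (together with $\alpha\mu<0$) this gives $u\|\stackrel\circ{Ric}\|^2\equiv 0$ on $M$, hence $(M,g)$ Einstein. Conversely, if $(M,g)$ is Einstein, then Theorem~\ref{teogammaneq0} or Theorem~\ref{gamma0} identifies $M$ with a geodesic ball in a simply connected space form, so $\Sigma$ is a round $4$-sphere and both the Weyl contribution and the bound on $S_\Sigma$ are automatically saturated. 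The only delicate points are the bookkeeping of the Chern-Gauss-Bonnet normalization constants in dimension four and the careful handling of the equality chain; the analytical heart of the argument is entirely supplied by Remark~\ref{Remark2} and essentially mirrors what has already been done for Theorem~\ref{areangeneral}.
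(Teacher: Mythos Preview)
Your proposal is correct and follows essentially the same route as the paper: combine the integral identity of Remark~\ref{Remark2} (together with the Gauss-equation expression for $\stackrel\circ{Ric}(\nu,\nu)$ on the totally umbilical boundary) to obtain a lower bound involving $S_\Sigma$, and then feed this into the four-dimensional Chern--Gauss--Bonnet formula with the nonnegative Weyl term discarded. The one minor difference is in the passage from a bound on $\int_\Sigma S_\Sigma$ to one on $\int_\Sigma S_\Sigma^2$: the paper does this via H\"older's inequality, whereas you first invoke Schur's lemma (legitimate since $\dim\Sigma=4\geq 3$) to make $S_\Sigma$ constant, turning the integral bound into a pointwise one that can simply be squared. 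Since $S_\Sigma$ is in fact constant, H\"older is an equality here anyway, so your shortcut loses nothing and is arguably cleaner. Your equality discussion is also more explicit than the paper's (which simply appeals to the equality case in inequality~\eqref{inequality2}); it is correct, though when you feed $\int_\Sigma\stackrel\circ{Ric}(\nu,\nu)=0$ back into the identity you should observe that the hypothesis $\int_M\mathscr{L}_{\nabla u}S\geq 0$ is needed alongside $\alpha\mu<0$ to force both remaining terms to vanish.
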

\begin{proof}
Notice that we can use inequality~\eqref{inequality2}, from which we deduce
\[
\left(\frac{3}{5}\min_{\Sigma}S+\frac{3}{4}H^{2}\right)|\Sigma|\leq\int_{\Sigma}S_{\Sigma}.
\]
Using H\"older inequality, we obtain
\begin{equation}\label{equacaodimensao5}
\left(\frac{3}{5}\min_{\Sigma}S+\frac{3}{4}H^{2}\right)^2|\Sigma|\leq\int_{\Sigma}S^2_{\Sigma}.
\end{equation}
Now recall the Gauss-Bonnet-Chern formula: 
\[
8\pi^2\chi(\Sigma)= \frac{1}{4}\int_{\Sigma}\|W\|^2
+\frac{1}{24}\int_{\Sigma}S^2_{\Sigma}-\frac{1}{2}
\int_{\Sigma}\|\stackrel{\circ}{\rm Ric}_{\Sigma}\|^2,
\] 
where $W$ is the Weyl tensor of $g_\Sigma$. So, under the Einstein assumption and~\eqref{equacaodimensao5}, we get  
\[
8\pi^2\chi(\Sigma)\geq \frac{1}{24}\int_{\Sigma}S^2_{\Sigma}\geq \frac{1}{24}\left(\frac{3}{5}\min_{\Sigma}S+\frac{3}{4}H^{2}\right)^2|\Sigma|.
\]
We conclude our proof from Theorem~\ref{areangeneral}, since we can use the occurrence of the equality in \eqref{inequality2}.
\end{proof}

\begin{remark}
It is worth mentioning that similar results can be obtained in the same way as in Theorem~\ref{areangeneral} and Corollary~\ref{teofivedimensional} by analysing $\alpha\mu>0$.
\end{remark}

\section{Degenerate and nondegenerate conditions}\label{Sec-DNDC}
This section consists of two parts: degenerate and nondegenerate conditions. We start by assuming the degenerate condition: $\beta^2=(n-2)\mu\alpha$ and $\beta\neq0.$ Of course, the parameters $\mu$ and $\alpha$ are not null, so that we can consider the nonconstant smooth function $f=\frac{\beta}{\mu}\ln(u)$ in $int(M).$ Note that 
\begin{equation*}
df=\frac{\beta}{\mu u}du \quad\mbox{and}\quad \nabla^2\ln u=\frac{1}{u}\nabla^2u-\frac{1}{u^2}du\otimes du, 
\end{equation*}
thus by \eqref{1-1}, one has
\begin{eqnarray*}
\nabla^2f &=& \frac{\beta}{\mu u}\nabla^2u-\frac{\beta}{\mu u^2}du\otimes du\\
&=&\Lambda g -\frac{\alpha}{\beta}Ric -\frac{\mu}{\beta}df\otimes df + \frac{\beta}{\mu u}\gamma g.
\end{eqnarray*}
Hence, PDE~\eqref{1-1} in $int(M)$ is equivalent to
\begin{equation}\label{1-1Int}
\alpha Ric+\beta \nabla^2f+\mu df\otimes df = \Big(\Lambda\beta+\frac{\beta^2}{\mu}\gamma e^{-\frac{\mu}{\beta}f}\Big)g.
\end{equation}

In the case where the boundary is empty, we consider $\gamma=0$ and $\Lambda =\rho S +\lambda$, for some constant $\rho$ and some smooth function $\lambda$ on $M^n$, so that we recover Eq.~\eqref{1-2} by Catino et al.~\cite{CMMR}. Now we follow the approach in \cite{CMMR} in order to show an expected characterization for degenerate gradient Einstein-type manifolds. Recall that a manifold $(M^n,g)$ is conformally Einstein if its metric $g$ can be pointwise conformally deformed to an Einstein metric $\tilde g.$

\begin{lemma}\label{Justify-DC}
A gradient Einstein-type manifold $(M^n,g)$ is degenerate in $int(M)$ if and only if it is conformally Einstein in $int(M).$
\end{lemma}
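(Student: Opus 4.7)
The plan is to deduce the lemma from the interior reformulation~\eqref{1-1Int} of the Einstein-type equation combined with the classical conformal transformation law for the Ricci tensor. Note first that under the degenerate condition $\beta^{2}=(n-2)\alpha\mu$ with $\beta\neq 0$, the parameters $\alpha$ and $\mu$ are automatically nonzero, so $f=\frac{\beta}{\mu}\ln u$ is a well-defined smooth function on $int(M)$ and \eqref{1-1Int} is available.

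For the forward direction, I would perform the conformal change $\tilde g = e^{2\phi} g$ with $\phi=-\frac{\beta}{(n-2)\alpha}f$; after substituting $f=\frac{\beta}{\mu}\ln u$ and invoking the degenerate condition, this simplifies to $\tilde g = u^{-2}g$. Recalling the standard identity
\begin{equation*}
\widetilde{Ric}=Ric-(n-2)\nabla^{2}\phi+(n-2)d\phi\otimes d\phi-\big(\Delta\phi+(n-2)|\nabla\phi|^{2}\big)g,
\end{equation*}
the chosen $\phi$ yields $-(n-2)\nabla^{2}\phi=\frac{\beta}{\alpha}\nabla^{2}f$ and $(n-2)d\phi\otimes d\phi=\frac{\beta^{2}}{(n-2)\alpha^{2}}df\otimes df$, the latter equaling $\frac{\mu}{\alpha}df\otimes df$ precisely when $\beta^{2}=(n-2)\alpha\mu$. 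Dividing \eqref{1-1Int} by $\alpha$ and substituting, the $\nabla^{2}f$ and $df\otimes df$ contributions cancel, leaving $\widetilde{Ric}$ equal to a scalar function times $g$, hence times $\tilde g$. Schur's lemma (valid since $n\geq 3$ and $int(M)$ is connected) then upgrades this pointwise proportionality to a constant one, so $\tilde g$ is Einstein and $g$ is conformally Einstein in $int(M)$.

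For the converse, I would assume $\tilde g = e^{2\phi}g$ is Einstein for some smooth $\phi$ and extract the traceless part of the conformal formula to obtain
\begin{equation*}
\stackrel\circ{Ric} = (n-2)\stackrel\circ{\nabla^{2}\phi} - (n-2)\stackrel\circ{d\phi\otimes d\phi}.
\end{equation*}
On the other hand, the traceless part of \eqref{1-1Int} divided by $\alpha$ reads
\begin{equation*}
\stackrel\circ{Ric}+\frac{\beta}{\alpha}\stackrel\circ{\nabla^{2}f}+\frac{\mu}{\alpha}\stackrel\circ{df\otimes df}=0.
\end{equation*}
Following~\cite{CMMR}, the only scalar datum at hand is $f$, so one takes $\phi=cf$ and matches coefficients in the two displays: the resulting conditions $-(n-2)c=\beta/\alpha$ and $(n-2)c^{2}=\mu/\alpha$ are simultaneously solvable in $c$ if and only if $\beta^{2}=(n-2)\alpha\mu$, which is exactly the degenerate condition.

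The main obstacle is in the converse direction, specifically in justifying that the conformal factor realizing the Einstein property can be taken as a multiple of $f$. The resolution, already used in~\cite{CMMR} in the boundaryless setting, is that the Einstein-type equation itself is what supplies the conformally Einstein structure, so the equivalence asserted by the lemma is genuinely a match between the algebraic condition on the parameters and the existence of a conformally Einstein representative built from the same function $u$.
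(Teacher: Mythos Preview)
Your forward implication matches the paper's: both apply the conformal change $\tilde g = e^{-\frac{2\beta}{(n-2)\alpha}f}g$, plug into the standard Ricci transformation law, and use the degenerate relation together with \eqref{1-1Int} to see that $\widetilde{Ric}$ is a scalar multiple of $g$ (hence of $\tilde g$). Your explicit appeal to Schur's lemma is a welcome clarification; the paper leaves this implicit and simply refers to \cite{besse} and \cite{CMMR} at that point.

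The converse is where the two arguments diverge. The paper fixes the exponent $a=-\frac{\beta}{(n-2)\alpha}$ from the outset, traces both the conformal Ricci identity and \eqref{1-1Int}, and equates the two resulting expressions for $S+\frac{\beta}{\alpha}\Delta f$ to obtain a formula for $\Lambda$ containing the term $\frac{1}{n\beta}\big(\mu-\frac{\beta^{2}}{(n-2)\alpha}\big)|\nabla f|^{2}$; it then ``chooses'' $\Lambda$ so that this coefficient vanishes, yielding $\beta^{2}=(n-2)\alpha\mu$. You instead pass to traceless parts, leave the constant $c$ in $\phi=cf$ free, and match coefficients of $\stackrel\circ{\nabla^{2}f}$ and $\stackrel\circ{df\otimes df}$. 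Both routes share the caveat you correctly flag---``conformally Einstein'' is being read as conformally Einstein \emph{via the potential $f$}, in the sense of \cite{CMMR}. Your traceless computation is conceptually clean, but it tacitly assumes that $\stackrel\circ{\nabla^{2}f}$ and $\stackrel\circ{df\otimes df}$ are linearly independent so that the two coefficient equations must hold separately; the paper's single scalar trace identity sidesteps that point, though at the cost of the somewhat opaque ``choose $\Lambda$'' step.
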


\begin{proof}
If $\tilde{g}=e^{2a\varphi}g$, for some real constant $a$ and some smooth function $\varphi$ on $M^n,$ then the Ricci tensor $\tilde{Ric}$ of $\tilde{g}$ is related to $g$ by the well-known formula (see Besse~\cite{besse})
\begin{equation*}
\tilde{Ric}=Ric-(n-2)a\nabla^2\varphi+(n-2)a^2d\varphi\otimes d\varphi - [(n-2)a^2|\nabla\varphi|^2+a\Delta\varphi]g.
\end{equation*}
By choosing $\varphi=f$ satisfying \eqref{1-1Int} in $int(M)$ and $a=-\frac{\beta}{(n-2)\alpha}$, one has
\begin{equation*}
\tilde{Ric}=Ric+\frac{\beta}{\alpha}\nabla^2 f+\frac{\beta^2}{(n-2)\alpha^2}df\otimes df - \Big[\frac{\beta^2}{(n-2)\alpha^2}|\nabla f|^2-\frac{\beta}{(n-2)\alpha}\Delta f\Big]g.
\end{equation*}
Under the degenerate condition $\beta^2=(n-2)\mu\alpha$, the previous equation becomes
\begin{equation*}
\tilde{Ric}=Ric+\frac{\beta}{\alpha}\nabla^2 f+\frac{\mu}{\alpha}df\otimes df - \Big[\frac{\mu}{\alpha}|\nabla f|^2-\frac{\beta}{(n-2)\alpha}\Delta f\Big]g.
\end{equation*}
From \eqref{1-1Int}, we have
\begin{equation*}
\tilde{Ric}=\frac{1}{\alpha}\Big(\Lambda\beta+\frac{\beta^2}{\mu}\gamma e^{-\frac{\mu}{\beta}f}  - \mu|\nabla f|^2 + \frac{\beta}{n-2}\Delta f\Big)g.
\end{equation*}
On the other hand, tracing \eqref{1-1Int}, we get
\begin{equation*}
\Lambda\beta+\frac{\beta^2}{\mu}\gamma e^{-\frac{\mu}{\beta}f}=\frac{1}{n}\Big(\alpha S+\beta \Delta f+\mu |\nabla f|^2\Big).
\end{equation*}
So, by a straightforward computation
\begin{equation*}
\tilde{Ric}=\frac{1}{n}\Big(S + 2\frac{\beta}{\alpha}\frac{n-1}{n-2}\Delta f-\frac{\beta^2}{\alpha^2}\frac{n-1}{n-2}|\nabla f|^2\Big) e^{\frac{2\beta}{(n-2)\alpha}f}\tilde{g}.
\end{equation*}
Thus, $\tilde{g}=e^{\frac{-2\beta}{(n-2)\alpha}f}g$ is an Einstein metric in $int(M)$, see also \cite[Section~2]{CMMR} and \cite[Theorem~1.159]{besse}.

Conversely, if $\tilde{g}=e^{\frac{-2\beta}{(n-2)\alpha}f}g$ is an Einstein metric, i.e., $\tilde{Ric}=C\tilde{g}$ for some constant $C$, with $f$ satisfying \eqref{1-1Int} in $int(M),$ then
\begin{equation*}
Ric+\frac{\beta}{\alpha}\nabla^2 f+\frac{\beta^2}{(n-2)\alpha^2}df\otimes df = \Big[\frac{\beta^2}{(n-2)\alpha^2}|\nabla f|^2-\frac{\beta}{(n-2)\alpha}\Delta f + Ce^{\frac{-2\beta}{(n-2)\alpha}f}\Big]g.
\end{equation*}
Tracing, we obtain
\begin{equation*}
S+\frac{\beta}{\alpha}\Delta f= -\frac{\beta^2}{(n-2)\alpha^2}|\nabla f|^2 +\Big[\frac{\beta^2}{(n-2)\alpha^2}|\nabla f|^2-\frac{\beta}{(n-2)\alpha}\Delta f + Ce^{\frac{-2\beta}{(n-2)\alpha}f}\Big]n.
\end{equation*}
On the other hand, again from \eqref{1-1Int}
\begin{equation*}
S+\frac{\beta}{\alpha}\Delta f=\frac{1}{\alpha}\Big[-\mu |\nabla f|^2 + \Big(\Lambda\beta+\frac{\beta^2}{\mu}\gamma e^{-\frac{\mu}{\beta}f}\Big)n\Big].
\end{equation*}
Hence
\begin{eqnarray*}
\Lambda &=& \frac{1}{n\beta}\Big[\Big(\mu -\frac{\beta^2}{(n-2)\alpha}\Big)|\nabla f|^2\Big] +\frac{\beta}{(n-2)\alpha}|\nabla f|^2-\frac{1}{n-2}\Delta f + \frac{\alpha}{\beta}Ce^{\frac{-2\beta}{(n-2)\alpha}f}\\
&&- \frac{\beta}{\mu}\gamma e^{-\frac{\mu}{\beta}f}.
\end{eqnarray*}
Now, we choose the function
\begin{equation*}
\Lambda = \frac{\beta}{(n-2)\alpha}|\nabla f|^2-\frac{1}{n-2}\Delta f + \frac{\alpha}{\beta}Ce^{\frac{-2\beta}{(n-2)\alpha}f} - \frac{\beta}{\mu}\gamma e^{-\frac{\mu}{\beta}f},
\end{equation*}
so that $\mu -\frac{\beta^2}{(n-2)\alpha}=0$ that implies $\beta^2= (n-2)\alpha\mu$.
\end{proof}

Now we assume the nondegenerate condition: $\beta^2\neq(n-2)\mu\alpha$, with all parameters not null. Here, we use the approach by the second author in~\cite{nazareno}. First of all, we observe that \eqref{1-1} in $int(M)$ is equivalent to
\begin{equation}\label{3.5-[7]}
Ric + h\nabla^2u = \ell g
\end{equation}
where $h=\frac{\beta^2}{\alpha\mu u}$ and $\ell=\frac{\Lambda\beta}{\alpha}+\frac{\gamma\beta^2}{\alpha\mu}\frac{1}{u}.$ Thus, we can use Eq.~(3.10) in~\cite{nazareno}), namely,
\begin{equation}\label{3.10-[7]}
[\beta^2-(n-2)\alpha\mu]du\wedge d\ell=0.
\end{equation}

Eq.~\eqref{3.10-[7]} immediately shows that: If $(M^n,g)$ is a gradient Einstein-type manifold and $g$ is nondegenerate in $int(M)$, then $\nabla\ell=\psi\nabla u$, for some smooth function $\psi$ in $int(M).$

We observe that for a gradient Einstein-type manifold to be an Einstein manifold it is necessary that the function $\Lambda$ be constant, see~\eqref{eqespecial}. Besides, when $\Lambda$ is constant, a necessary condition for $\ell$ be nonconstant is that $\gamma\neq 0$.

An appropriate setting to show a rigidity result for Einstein manifolds on the class of nondegenerate gradient Einstein-type manifolds $(M^n,g)$ is by means of equations~\eqref{3.5-[7]} and \eqref{3.10-[7]} both in $int(M)$ with a nonconstant function $\ell$. This is the content of the main theorem of this section. With Lemma~\ref{Justify-DC} in mind, we observe that the nondegenerate assumption in this theorem is really needed, due to the existence of homogeneous conformally Einstein metrics which are not Einstein metrics, see Besse~\cite{besse}.

\begin{theorem}\label{HCGEtM}
Let $(M^n,g)$ be a compact gradient Einstein-type manifold with constant Ricci curvatures, $\alpha$ and $\gamma$ nonzero and $\Lambda$ being a constant. If $g$ is nondegenerate in $int(M)$ and $\beta^2\neq-\mu\alpha,$ then $(M^n,g)$ is an Einstein manifold. In particular, it is isometric to a geodesic ball in a simply connected space form when its boundary is connected.
\end{theorem}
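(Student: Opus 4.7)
The plan is to show that, under the given hypotheses, $(M^n,g)$ must be Einstein; the ``in particular'' statement then follows immediately from Theorem~\ref{teogammaneq0}, since $\gamma\neq 0$ and the boundary is assumed connected. The constant Ricci curvatures hypothesis means the eigenvalues of $Ric$ are constant functions on $M^n$, so $S$, $|Ric|^2$ and $\|\stackrel\circ{Ric}\|^2$ are constant; in particular $\nabla S=0$ and the second contracted Bianchi identity gives $\dv\,Ric = 0$. Also, since $\Lambda$ and $S$ are constants, tracing~\eqref{1-1} yields $\Delta u = nAu + n\gamma$ for the constant $A:=\tfrac{\mu}{\beta}\bigl(\Lambda - \tfrac{\alpha S}{n\beta}\bigr)$, so $\nabla\Delta u = nA\,\nabla u$. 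I will work in $int(M)$ with the equivalent form~\eqref{3.5-[7]}, namely $Ric + h\nabla^2 u = \ell g$, where $h = \tfrac{\beta^2}{\alpha\mu u}$, $\ell = \tfrac{\Lambda\beta}{\alpha} + \gamma h$, $\nabla h = -\tfrac{h}{u}\nabla u$, $\nabla\ell = -\tfrac{\gamma h}{u}\nabla u$, and $B := uh = \beta^2/(\alpha\mu)$ is a constant.

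The first main step is to take the divergence of $Ric + h\nabla^2 u = \ell g$. Combining $\dv\,Ric=0$, the Bochner identity $\dv(\nabla^2 u) = d\Delta u + Ric(\nabla u,\cdot)$, the expression $\nabla^2 u(\nabla h,\cdot) = -\tfrac{h}{u}\nabla^2 u(\nabla u,\cdot) = -\tfrac{\ell}{u}du + \tfrac{1}{u}Ric(\nabla u,\cdot)$ (obtained by rewriting $h\nabla^2 u = \ell g - Ric$), and the substitution $\nabla\Delta u = nA\nabla u$, I expect that after multiplying by $u$ and using $\ell - \gamma h = \Lambda\beta/\alpha$ together with $nAB = n\Lambda\beta/\alpha - S$, all terms collapse into
\begin{equation*}
(1+B)\,Ric(\nabla u) = \Big(S - \tfrac{(n-1)\Lambda\beta}{\alpha}\Big)\nabla u.
\end{equation*}
The standing hypothesis $\beta^2 \neq -\alpha\mu$ is precisely $1+B\neq 0$, so dividing gives $Ric(\nabla u) = \lambda\,\nabla u$ on all of $M^n$ for the constant $\lambda := (\alpha\mu S - (n-1)\mu\beta\Lambda)/(\alpha\mu + \beta^2)$.

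The second main step is to compute $\langle Ric,\nabla^2 u\rangle$ in two different ways. On the one hand, $\dv(Ric(\nabla u)) = (\dv\,Ric)(\nabla u) + \langle Ric,\nabla^2 u\rangle = \langle Ric,\nabla^2 u\rangle$, while $\dv(\lambda\nabla u) = \lambda\Delta u$, so $\langle Ric,\nabla^2 u\rangle = \lambda\Delta u$. On the other hand, taking the inner product of $\nabla^2 u$ with both sides of $Ric = \ell g - h\nabla^2 u$ and using $h\Delta u = n\ell - S$ gives $\langle Ric,\nabla^2 u\rangle = (\ell S - |Ric|^2)/h$. Equating and clearing $h$,
\begin{equation*}
\ell\,(S - n\lambda) = |Ric|^2 - \lambda S.
\end{equation*}
The right-hand side is constant, but $\ell$ is non-constant on $M^n$ since $\gamma\neq 0$ contributes a non-trivial $1/u$ term; hence both sides vanish, yielding $\lambda = S/n$ and $|Ric|^2 = S^2/n$. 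The equality case of the elementary Cauchy--Schwarz inequality $|Ric|^2\geq S^2/n$ then forces $Ric = \tfrac{S}{n}g$, so $(M^n,g)$ is Einstein, which completes the proof.

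The main obstacle I anticipate is the careful algebraic bookkeeping in the first main step, where four pieces of information ($\dv\,Ric=0$, the formulas for $\nabla h$ and $\nabla\ell$, and the relation $\nabla\Delta u = nA\nabla u$) must be combined, after re-expressing $\nabla^2 u(\nabla h,\cdot)$ through the equation itself, into the clean proportionality $(1+B)Ric(\nabla u) = (\mathrm{const})\,\nabla u$; crucially, the hypothesis $\beta^2\neq -\alpha\mu$ must emerge as the non-vanishing of the coefficient $1+B$. Once that identity is in hand, the remainder is essentially algebra together with the Cauchy--Schwarz equality case.
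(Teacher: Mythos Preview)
Your proof is correct and follows essentially the same strategy as the paper: take the divergence of $Ric+h\nabla^2u=\ell g$ to show that $\nabla u$ is a Ricci eigenvector with constant eigenvalue (using $\beta^2\neq-\alpha\mu$), then derive an identity of the form $\ell\cdot(\text{constant})=\text{constant}$ and exploit that $\ell$ is nonconstant to force $\|\stackrel\circ{Ric}\|^2=0$; your second step via $\langle Ric,\nabla^2u\rangle$ is algebraically equivalent to the paper's use of~\eqref{EqMain}. One pleasant by-product of your explicit bookkeeping is that you never invoke the nondegenerate hypothesis---since $\Lambda$ is constant, $\ell$ is already a function of $u$, so $d\ell\wedge du=0$ automatically and the nondegeneracy assumption is in fact superfluous here.
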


\begin{proof} 
First note that equations~$(3.11)$, $(3.12)$ and $(3.13)$ of~\cite{nazareno} still hold in $int(M)$. Since $\ell$ is nonconstant, we are in position to apply the same argument as in \cite[Theorem~3]{nazareno} to conclude that $\|\mathring{Ric}\|^2$ vanishes in $int(M)$, and the result follows by continuity. For the sake of completeness we shall present a brief sketch of the last claim. Indeed, from the second contracted Bianchi identity and \eqref{3.5-[7]}, we obtain
\begin{equation*}
\frac{\beta^2+\mu\alpha}{\mu\alpha}Ric(\nabla u) = -(n-1)u\nabla\ell -[(n-1)\ell-S]\nabla u.
\end{equation*}
Nondegenerate condition implies that $\nabla\ell=\psi\nabla u$ and then
\begin{equation*}
\frac{\beta^2+\mu\alpha}{\mu\alpha}Ric(\nabla u) = -[(n-1)u\psi+(n-1)\ell-S]\nabla u.
\end{equation*}
So, the assumption $\beta^2\neq-\mu\alpha$ implies that $\nabla u$ is an eigenvector of the Ricci tensor with constant eigenvalue, since we have assumed that the Ricci curvatures of $(M^n, g)$ are constant.
Hence,
\begin{equation*}
\mathring{Ric}(\nabla u)=C\nabla u, \quad C= -\frac{\mu\alpha}{\beta^2+\mu\alpha}[(n-1)u\psi+(n-1)\ell-S]-\frac{S}{n}.
\end{equation*}
Combining this latter equation with \eqref{EqMain} and \eqref{3.5-[7]}, we get
\begin{equation*}
C(n\ell-S)=C\frac{\beta^2}{\alpha\mu u}\Delta u=-\|\mathring{Ric}\|^2.
\end{equation*}
Since $\|\mathring{Ric}\|^2$ is constant and $\ell$ is nonconstant, we conclude, by simple analysis on this latter equation that $\|\mathring{Ric}\|
^2$ vanishes in $int(M),$ and then the result follows by continuity and Theorem~\ref{teogammaneq0}.
\end{proof}

Finally, we observe that by taking $\alpha=0$ and $\Lambda$ to be constant in \eqref{1-1}, we know from the Reilly's result~\cite[Theorem~B, Parts (I) and (II)]{reilly2} that:
\begin{itemize}
    \item[(i)] If $\Lambda=0$, then $\gamma$ must be nonzero by the maximum principle, and $g$ is flat; 
    \item[(ii)] If $\Lambda\neq0$ and $\gamma=0$, then $g$ is of constant sectional curvature $-\frac{\mu\Lambda}{\beta}.$
\end{itemize}

\section{Concluding remarks}
We conclude this paper by appending an alternative proof of Theorems~\ref{teogammaneq0} and~\ref{gamma0}. These proofs conceptually differs from the one we have already presented, as they make use of a Reilly's result and the Bishop-Gromov comparison theorem together with the explicit value of the  mean curvature of the boundary proved in Proposition~\ref{sinalgamma}, which are in agreement with our examples as well as the boundary characterization proved in Proposition~\ref{prop1}.

\subsection{An alternative proof of Theorem~\ref{teogammaneq0}}
Without loss of generality, we can suppose that $Ric=\kappa (n-1)g$, with $\kappa\in\left\{-1,0,1\right\}$. For $\kappa=0$, Lemma~\ref{lemma1} guarantees that $\nabla^{2} u=\gamma g,$ thus, from the Reilly's result~\cite{reilly2}, we know that $(M^{n},g)$ is isometric to a geodesic (metric) ball in a Euclidean space.

For $\kappa\in\{-1,1\}$, we follow the approach by Miao and Tam~\cite{MT2}. For it, let $p\in M^n$ be an interior maximum point of $u$, consider $r_{0}=d(p,\Sigma)=d(p,q_{0})$, for some $q_{0}\in\Sigma$, and a geodesic ball $B(p,r_0)$ of radius $r_0$ centered at $p$. First we claim that $\partial B(p,r_0)\subset \Sigma$. Indeed, if $q\in\partial B(p,r_0)$, then $q=\sigma(r_{0})$ for some minimizing unit speed geodesic $\sigma:[0,r_{0}]\to M^n$ emanating from $p$. Luckily, by the solution of EDO~\eqref{edo} the value $u(\sigma(s))$ depends only on the parameter $s$ (it does not depend on geodesic $\sigma$), then $u(q)=u(\sigma(r_{0}))=u(q_{0})=0$ and $q\in \Sigma=u^{-1}(0)$, this proves the claim. Since $M^n$ is compact and connected, then $M^n=B(p,r_0)$. Now, we compare the volume of $B(p,r_0)$ with the volume of a geodesic ball in a space form. Let $A(s)$ be the area element of a level surface $\Sigma_{s}\subset B(p,r_0)$, $s\in(0,r_{0}]$. We prove the $\kappa=1$ case. By Proposition~\ref{sinalgamma} and the first variation of the area formula (if necessary the reader may look the Li's book~\cite{Li}), we get the initial value problem:
\begin{equation*}
 A'(s)=(n-1)\cot s A(s) \ \ \mbox{and} \ \ A(0)=0.   
\end{equation*}
By unicity of the solution of this ODE, we have that the area of each level surface $\Sigma_{s}$ is equal to the area of a level surface in a geodesic ball in $\mathbb{S}^{n},$ see Example~\ref{geodesicballsn}. Since,
\begin{equation*}
vol(B(p,r_0))=\int_{0}^{r_{0}}A(s)ds,    
\end{equation*}
we obtain that this volume is equal to the volume of the latter mentioned geodesic ball. Using Bishop-Gromov's comparison theorem (see, e.g., Lee~\cite[p.~1171]{lee}) we conclude our proof. The $\kappa=-1$ case is very similar.

\subsection{An alternative proof of Theorem~\ref{gamma0}}
Taking $\gamma=0$ in Lemma~\ref{lemma1}, we get $\nabla^{2} u=-\frac{S}{n(n-1)} u g.$  It follows immediately from Proposition~\ref{sinalgamma} that $S>0$. Alternatively, since $u$ is nonconstant we have that $S$ is a nonzero eigenvalue of the Laplacian with Dirichlet boundary condition and, therefore, $S>0.$ As $u=0$ on $\Sigma$ (which is totally geodesic) we can use \cite[Lemma~3]{reilly1} to conclude that $(M^n,g)$ is isometric to a hemisphere of a round sphere $\mathbb{S}^n(c),$  with sectional curvature $c=\frac{S}{n(n-1)}$.

\section*{Acknowledgements}
Allan Freitas would like to thank the hospitality of the Mathematics Department of  Università degli Studi di Torino, where part of this work was carried out (there he was supported by CNPq/Brazil Grant 200261/2022-3). He is grateful to Luciano Mari for his warm hospitality and constant encouragement. Allan Freitas has also been partially supported by CNPq/Brazil Grant 316080/2021-7, by the public call n.~03 Produtividade em Pesquisa proposal code PIA13495-2020 and Programa Primeiros Projetos, Grant 2021/3175-FAPESQ/PB and MCTIC/CNPq. José Gomes has been partially supported by CNPq Grant 310458/2021-8 and by FAPESP/Brazil Grant 2023/11126-7.


\begin{thebibliography}{99}
\bibitem{Ambrozio} L. Ambrozio, On static three-manifolds with positive scalar curvature, J. Differential Geom. 107 (1) (2017), 1-45.
\bibitem{bjng} A. Barros and J.N. Gomes, A compact gradient generalized quasi-Einstein metric with constant scalar curvature, J. Math. Anal. Appl. (Print) 401 (2013), 702-705.
\bibitem{besse} A.L. Besse, Einstein manifolds, Springer, Berlin, 2008.
\bibitem{BGH} W. Boucher, G.W. Gibbons and G.T. Horowitz, Uniqueness theorem for anti-de Sitter spacetime, Physical Review D (3) 30 (1984), no. 12, 2447-2451.
\bibitem{BR} H.W. Brinkmann, Einstein spaces which are mapped conformally on each other, Mathematische Annalen  94 (1) (1926), 119-145.
\bibitem{CMMR} G. Catino, P. Mastrolia, D. Monticelli and M. Rigoli, On the geometry of gradient Einstein-type manifolds, Pacific J. Math. 286 (1) (2017), 39-67.
\bibitem{Chavel} I. Chavel, Eigenvalues in Riemannian Geometry, Academic Press, New York, 1984.
\bibitem{corvino} J. Corvino, Scalar curvature deformation and a gluing construction for the Einstein constraint equations,  Comm. Math. Phys. 214 (2000), 137-189.
\bibitem{nazareno} J.N.V. Gomes, A note on gradient Einstein-type manifolds, Differential Geom. Appl. 66 (2019), 13-22.
\bibitem{hawking} S.W. Hawking and G.F.R. Ellis, The Large Scale Structure of Space-Time, Cambridge University Press, 1973.
\bibitem{HPW1} C. He, P. Petersen and W. Wylie, On the classification of warped product Einstein metrics, Comm. Anal. Geom. 20 (2) (2012), 271-311.
\bibitem{HMRa} O. Hizagi, S. Montiel and S. Raulot, Uniqueness of the de Sitter spacetime among static vacua with positive cosmological constant, Ann. Glob. Anal. Geom. 47 (2014), 167-178.
\bibitem{lee} J. Lee, Introduction to Riemannian Manifolds, Springer International Publishing AG, 2018.
\bibitem{Li} P. Li, Geometric Analysis, Cambridge University Press, 2012.
\bibitem{MT} P. Miao and L.-F. Tam, On the volume functional of compact manifolds with boundary with constant scalar curvature, Calc. Var. Partial Differential Equations 36 (2009), 141-171.
\bibitem{MT2} P. Miao and L.-F. Tam, Einstein and conformally flat critical metrics of the volume functional, Trans. Amer. Math. Soc. 363 (6) (2011), 2907-2937.
\bibitem{Petersen} P. Petersen, Riemannian Geometry, 3nd ed. Springer, New York, 2016.
\bibitem{reilly1} R.C. Reilly, Applications of the Hessian operator in a Riemannian manifold, Indiana Univ. Math. J. 26 (3) (1977), 459-472.
\bibitem{reilly2} R.C. Reilly, Geometric Applications of the Solvability of Neumann Problems on a Riemannian Manifold, Arch. Ration. Mech. Anal. 75 (1980), 23-29.
\bibitem{Tashiro} Y. Tashiro, Complete Riemannian manifolds and some vector fields, Trans. Amer. Math. Soc. 117 (1965), 251-275.
\bibitem{yano2} K. Yano, The theory of Lie derivatives and its applications, Amsterdam, 1957.
\bibitem{yano} K. Yano, Integral formulas in Riemannian geometry, Marcel Dekker, Inc., New York, 1970.
\end{thebibliography}
\end{document}